\renewcommand\sss[1][n]{\mathfrak{S}_{#1}}
\renewcommand\spe[1]{\mathcal{S}^{#1}}
\newcommand\shp[1]{\operatorname{Shp}(#1)}
\begin{document}

\title{On the semisimplicity of the cyclotomic quiver Hecke algebra of type $C$}

\runninghead{On the semisimplicity of the cyclotomic quiver Hecke algebra of type $C$}

\msc{20C08, 05E10, 16G10, 81R10}

\toptitle

\begin{abstract}
We provide criteria for the cyclotomic quiver Hecke algebras of type $C$ to be semisimple.
In the semisimple case, we construct the irreducible modules.
\end{abstract}

\section{Introduction}

The \emph{quiver Hecke algebras} $\scrr_n$ were introduced by Khovanov and Lauda~\cite{kl09} and Rouquier~\cite{rouq} to categorify the negative half of quantum groups.
Kang and Kashiwara~\cite{kk12} later showed that cyclotomic quotients $\scrr^\La_n$ of $\scrr_n$ categorify irreducible highest weight modules with dominant integral highest weight $\La$.
Motivated and propelled by an isomorphism theorem of Brundan and Kleshchev \cite{bkisom}, these cyclotomic quotients have received a lot of attention in types $A_\infty$ and $A^{(1)}_\ell$.
However, in other types relatively little is known about the cyclotomic quiver Hecke algebras.
Among the few results here are Ariki and Park's results on the representation type of their blocks when $\La = \La_0$~\cite{apa2,apc,apd}.

One of the first questions one should ask when studying a finite-dimensional algebra is whether or not it is semisimple.
In this short note, we will prove semisimplicity criteria for the cyclotomic quiver Hecke algebras $\scrr^\La_n$ in type $C$, over a field, building on previous work \cite{aps}, in which we developed a Specht module theory in types $C_\infty$ and $C^{(1)}_\ell$.
Our result is a fundamental step in gaining a better understanding of these algebras.

Now we state our main result -- see \cref{sec: background} for definitions of the notation used.

\begin{thm}[Main Theorem]\label{main}
Over a field, $\scrr^\La_n$ is semisimple if and only if the following two conditions are satisfied.
\begin{enumerate}[label=(\roman*)]
\item For all $i \in I$, $\langle\La,\alpha^\vee_{i,n}\rangle \leq 1$.
\item For all $1\leq j\leq l$, $\tfrac{n-1}{2} \leq \overline{\kappa_j} \leq \ell - \tfrac{n-1}{2}$.
\end{enumerate}
\end{thm}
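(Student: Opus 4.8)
The plan is to derive the criterion from the graded Specht module theory for $\scrr^\La_n$ developed in \cite{aps}. That theory attaches to $\scrr^\La_n$ a family of Specht modules $\spe{\lambda}$, indexed by the relevant class of $l$-partitions $\lambda$ of $n$, each carrying a homogeneous bilinear form, and, as is standard for a Specht module theory of this kind, it entails: $\scrr^\La_n$ is semisimple if and only if every $\spe{\lambda}$ is irreducible and the $\spe{\lambda}$ are pairwise non-isomorphic, equivalently if and only if the bilinear form on each $\spe{\lambda}$ is non-degenerate; moreover, in the semisimple case the $\spe{\lambda}$ form a complete irredundant list of irreducible modules. The theorem thus reduces to showing that conditions (i) and (ii) hold precisely when all these Gram forms are non-degenerate, and the ``construct the irreducible modules'' part of the abstract is then fulfilled by the $\spe{\lambda}$ themselves.

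For the ``if'' direction, assume (i) and (ii). The first step is to read the two conditions combinatorially: (i) bounds, for each residue $i \in I$, how many components of the multicharge can interact at $i$, while (ii) is exactly the condition that the cyclotomic relation $y_1^{\langle\La,\alpha^\vee_{i_1}\rangle} = 0$ never annihilates a standard tableau of any shape $\lambda \vdash n$. Together these should supply the separation condition needed to build a \emph{seminormal basis} of each $\spe{\lambda}$ --- one on which the generators $\psi_r, y_r$ of $\scrr^\La_n$ act by explicit closed formulas in the spirit of the type $A$ construction of Hu and Mathas --- the point being that under (i) and (ii) all denominators occurring in those formulas are invertible. From these formulas the Gram form is diagonal in the seminormal basis with non-zero diagonal entries, hence non-degenerate, so every $\spe{\lambda}$ is irreducible and $\scrr^\La_n$ is semisimple; this simultaneously provides the explicit description of the irreducible modules.

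For the ``only if'' direction I would argue by contraposition, exhibiting a degenerate Gram form whenever (i) or (ii) fails. If (i) fails, fix $i \in I$ with $\langle\La,\alpha^\vee_{i,n}\rangle \geq 2$; then, using the one-node and Garnir-type homomorphisms furnished by the Specht theory of \cite{aps}, I would construct a non-zero homomorphism $\spe{\mu} \to \spe{\lambda}$ for a suitable pair $\lambda \neq \mu$ whose image is a proper submodule, so that $\spe{\lambda}$ is reducible. If instead (ii) fails --- say $\overline{\kappa_j} < \tfrac{n-1}{2}$ for some $j$, the failure of the upper bound being handled symmetrically via the order-two automorphism of the type $C^{(1)}_\ell$ diagram that exchanges the two extreme nodes --- then some standard tableau of a shape $\lambda \vdash n$ is killed by the cyclotomic relation, and the resulting extra relation inside $\spe{\lambda}$ produces a non-zero radical vector, i.e.\ the Gram determinant vanishes; again $\spe{\lambda}$ is reducible. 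In either case $\scrr^\La_n$ fails to be semisimple.

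I expect the ``only if'' direction to be the main obstacle. In type $C$ there is no Brundan--Kleshchev-style isomorphism to a classical cyclotomic Hecke algebra, so both the homomorphisms between Specht modules needed when (i) fails and the radical vectors forced when (ii) fails must be produced by hand from the combinatorics of \cite{aps}, and one has to verify that the failure of a condition forces --- rather than merely permits --- such a vector to be non-zero. A secondary difficulty, on the ``if'' side, is to check that (i) and (ii) are exactly what is required for every denominator in the seminormal action to be invertible; pinning down that numerology is where the precise bound $\tfrac{n-1}{2}$ in (ii) comes from.
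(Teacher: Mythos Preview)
Your plan relies on machinery that is not available in type $C$. You assume the Specht theory of \cite{aps} comes equipped with bilinear forms and a cellular structure in full generality, so that semisimplicity reduces to non-degeneracy of Gram determinants; but even the standard basis of $\spe\lambda$ in type $C^{(1)}_\ell$ is only conjectural (\cite[Conjecture~5.3]{aps}), and the paper establishes cellularity only \emph{after} assuming (i) and (ii) (\cref{Ccellular}), so it cannot serve as the starting point. You further propose a seminormal basis \`a la Hu--Mathas, but no such construction exists in type $C$ --- there is no Brundan--Kleshchev isomorphism to lean on --- so the ``all denominators invertible'' heuristic has no formulas to attach to. Your combinatorial reading of (ii) is also off: it is not about the cyclotomic relation annihilating tableaux, but about keeping each $\overline{\kappa_j}$ far enough from $0$ and $\ell$ that residues in any $\la\in\mptn l n$ do not wrap around the folded type $C$ pattern, so that each residue (other than $1$ and $\ell-1$) occupies at most one diagonal.

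The paper's argument is more direct and avoids these obstacles. For ``if'', one shows under (i) and (ii) that residue sequences determine standard tableaux uniquely, that $e(\bfi)=0$ for $\bfi\notin I^n_\La$, and --- the key step --- that every $x_r$ vanishes in $\scrr^\La_n$; the elements $e_{\tts\ttt}=\psi_{(w^\tts)^{-1}}e(\bfi^\la)\psi_{w^\ttt}$ then form a basis of matrix units, exhibiting $\scrr^\La_n$ as a direct sum of matrix rings. For ``only if'', there is no appeal to Gram forms or Specht homomorphisms in the abstract: in each failure case one produces either an explicit two-dimensional uniserial module by hand (when some $\overline{\kappa_j}\in\{0,\ell\}$, or when two $\overline{\kappa_j}$ coincide) or a one-dimensional submodule $\bbf v^\ttt$ inside a concrete Specht module $\spe\la$ (for the remaining cases), and checks it has no complement by comparing residue sequences.
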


Our proof that $\scrr^\La_n$ is semisimple when the above two conditions hold is inspired by an argument from Mathas's survey~\cite{m14surv} in type $A$.
In the other direction, when the conditions fail, we explicitly construct modules that have one-dimensional submodules, which we show have no complement, thus concluding that $\scrr^\La_n$ is not semisimple.
In most cases, the modules we construct are in fact Specht modules, and our previous work with Ariki and Park~\cite{aps} is crucial to our proof.

\begin{ack}
The author is an International Research Fellow of the Japan Society for the Promotion of Science.
We thank Professor Susumu Ariki for useful discussions on the contents of this paper, and Professor Andrew Mathas for helpful comments on an earlier version.
We also thank JSPS for their generous financial support.
Finally, we thank the referee for their helpful comments.
\end{ack}

\section{Background}\label{sec: background}

We begin by providing a brief summary of the necessary definitions.
Throughout, we let $\calo$ denote an arbitrary integral domain.
All our modules are left modules.

\subsection{The quiver Hecke algebras}

Let $\ell \in \{2,3,\dots\} \cup \{\infty\}$, and set $I:=\bbz/(\ell+1)\bbz$ if $\ell<\infty$ or $I = \bbz_{\geq0}$ if $\ell = \infty$.
If $\ell<\infty$, we identify $I$ with the set $\{0,1,2,\dots,\ell\}$.
We adopt standard notation from \cite{Kac} for the root datum of type $C^{(1)}_\ell$ or $C_\infty$.
In particular, we have \emph{simple roots} $\{\alpha_i \mid i\in I\}$, \emph{simple coroots} $\{\alpha^\vee_i \mid i \in I\}$, and we have \emph{fundamental weights} $\{\La_i \mid i \in I\}$ in the \emph{weight lattice} $\sfp$.
We let $\sfq^+:= \bigoplus_{i\in I} \bbz_{\geq 0} \alpha_i$ be the \emph{positive cone of the root lattice} and $\sfp^+:= \{\La\in \sfp\mid \langle \La, \alpha^\vee_i \rangle \geq 0 \text{ for all } i\in I\}$ the \emph{positive weight lattice}, where $\langle - , -\rangle$ is the natural pairing $\langle \La_i, \alpha^\vee_j \rangle = \delta_{i,j}$.
We say that $\beta = \sum_{i\in I} a_i \alpha_i \in \sfq^+$ has \emph{height} $\operatorname{ht}(\beta) = \sum_{i\in I} a_i$, and $\La = \sum_{i\in I} b_i \La_i \in \sfp^+$ has \emph{level} $\sum_{i\in I} b_i$.
Set $\sfq^+_n:=\{\beta \in \sfq^+ \mid \operatorname{ht}(\beta) = n\}$.

For any $\beta \in \sfq^+_n$, we set $I^\beta = \{\bfi \in I^n \mid \alpha_{i_1} + \dots + \alpha_{i_n} = \beta\}$.
The symmetric group $\sss$ acts on elements of $I^n$ by place permutation.

The \emph{quiver Hecke algebra} $\scrr_\beta$ is the unital associative $\calo$-algebra with generators
\[
\{e(\bfi) \mid \bfi \in I^\beta\} \cup \{x_1, \dots, x_n\} \cup \{\psi_1, \dots, \psi_{n-1}\},
\]
subject to the following relations.

\begin{alignat*}{2}
e(\bfi)e(\bfj)&=\delta_{\bfi, \bfj} e(\bfi); \! \qquad\qquad\qquad\qquad\qquad & x_r e(\bfi) &= e(\bfi) x_r;\\
\sum_{\bfi \in I^\beta} e(\bfi)&=1;  & x_r x_s &= x_s x_r;\\
\psi_r e(\bfi) &= e(s_r\bfi) \psi_r; & \psi_r x_s &= \mathrlap{x_s \psi_r}\hphantom{\smash{\psi_s\psi_r}} \quad \text{if } s\neq r,r+1;\\
x_r \psi_r e(\bfi) &=(\psi_r x_{r+1} - \delta_{i_r,i_{r+1}})e(\bfi); & \psi_r \psi_s &= \psi_s\psi_r \quad \text{if } |r-s|>1;\\
x_{r+1} \psi_r e(i) &=(\psi_r x_r + \delta_{i_r,i_{r+1}})e(\bfi);
\end{alignat*}
\vspace{-4ex}
\begin{align*}
\psi_r^2 e(\bfi) &=
\begin{cases}
(x_r + x_{r+1}^2) e(\bfi) & \text{if } (i_r, i_{r+1}) = (0,1) \text{ or if } (\ell, \ell-1);\\
\\
(x_r^2 + x_{r+1}) e(\bfi) & \text{if } (i_r, i_{r+1}) = (1,0) \text{ or if } (\ell-1, \ell);\\
\\
(x_r + x_{r+1}) e(\bfi) & \text{if } i_{r+1} = i_r \pm 1, \ i_r\neq 0\neq i_{r+1}, \ i_r\neq \ell\neq i_{r+1};\\
\\
0 & \text{if } i_r = i_{r+1};\\
\\
e(\bfi) & \text{otherwise}.
\end{cases}\\
\\
\psi_{r+1} \psi_{r} \psi_{r+1} e(\bfi)&=\begin{cases}
(\psi_{r} \psi_{r+1} \psi_{r} + x_r + x_{r+2}) e(\bfi) & \text{if } (i_r, i_{r+2}, i_{r+1}) = (1, 0,1)\\
& \qquad\qquad\qquad \:\, \text{ or } (\ell-1, \ell, \ell-1);\\
(\psi_{r}\psi_{r+1}\psi_{r} + 1) e(\bfi) & \text{if }i_r = i_{r+2} = i_{r+1} \pm 1, \text{ and } i_{r+1} \neq0,\ell;\\
\\
\psi_{r}\psi_{r+1}\psi_{r} e(\bfi) & \text{otherwise}.\end{cases}
\end{align*}

The quiver Hecke algebra $\scrr_n$ is defined to be $\bigoplus_{\beta\in \sfq^+_n}\scrr_\beta$.
These algebras have cyclotomic quotients, which are our primary interest here.
The \emph{cyclotomic quiver Hecke algebra} $\scrr^\La_\beta$ is the quotient of $\scrr_\beta$ by the additional \emph{cyclotomic relations}
\[
x_1^{\langle \La, \alpha^\vee_{i_1} \rangle} e(\bfi) = 0 \text{ for all } \bfi\in I^\beta.
\]
The \emph{cyclotomic quiver Hecke algebra} $\scrr^\La_n$ is defined to be $\bigoplus_{\beta\in \sfq^+_n}\scrr^\La_\beta$.

The quiver Hecke algebras and their cyclotomic quotients may be ($\bbz$-)graded by
\[
\deg e(\bfi) = 0, \qquad \deg x_r e(\bfi) = (\alpha_{i_r}, \alpha_{i_r}), \qquad \deg \psi_r e(\bfi) = (\alpha_{i_r}, \alpha_{i_{r+1}}),
\]
where $(- ,-)$ is the invariant symmetric bilinear form on $\sfp$.

\begin{rem}
Technically, we have made a choice of certain polynomials in our definition of the quiver Hecke algebras.
See \cite[\S2.1--2.2]{aps} for discussion of these polynomials and the choice we have made.
\end{rem}

\subsection{Multipartitions and tableaux}

A \emph{partition} $\la$ of $n$ is a weakly decreasing sequence of non-negative integers $\la  = (\la_1, \la_2, \dots)$ such that $\sum \la_i = n$.
We write $\varnothing$ for the unique partition of $0$.
For any $l\geq 1$, an \emph{$l$-multipartition} of $n$ is an $l$-tuple $\la = (\la^{(1)}, \dots, \la^{(l)})$.
We denote the set of all $l$-multipartitions of $n$ by $\mptn l n$.
For $\la, \mu \in \mptn l n$, we say that $\la$ \emph{dominates} $\mu$, and write $\la \dom \mu$ or $\mu \domby \la$, if for all $1\leq t\leq l$ and $r\geq0$,
\[
|\la^{(1)}| + \dots + |\la^{(t-1)}| + \sum_{j=1}^r \la^{(t)}_r \geq |\mu^{(1)}| + \dots + |\mu^{(t-1)}| + \sum_{j=1}^r \mu^{(t)}_r.
\]

For $\la\in \mptn l n$, we define the \emph{Young diagram} $[\la]$ to be the set
\[
\{(r,c,t) \in \bbz_{>0} \times \bbz_{>0} \times \{1,\dots, l\} \mid c\leq \la^{(t)}_r\}.
\]
We call elements of $[\la]$ \emph{nodes}.
We draw the Young diagram of a partition using the English convention (where the first coordinate increases down the page and the second coordinate increases from left to right), and of a multipartition as a column vector of Young diagrams for each component.
We say that $A\notin [\la]$ is an \emph{addable node} (for $\la$) if $[\la] \cup A$ is a valid Young diagram of a multipartition.

Let $p$ be the natural projection $p: \bbz \rightarrow \bbz/2\ell\bbz$ if $\ell < \infty$, and $p = \operatorname{id}$ if $\ell = \infty$.
If $\ell = \infty$, we define $f_\ell: \bbz \rightarrow I$ by $k \mapsto |k|$.
If $\ell < \infty$, we define $f_\ell: \bbz/2\ell\bbz \rightarrow I$ by $f_\ell(0) = 0$, $f_\ell(\ell) = \ell$, and $f_\ell(k) = f_\ell(2\ell-k) = k$ for $1\leq k\leq \ell-1$.
Then we define $\overline{\phantom{k}}: f_\ell \circ p:\bbz \rightarrow I$.

Given a \emph{multicharge} $\kappa = (\kappa_1, \dots, \kappa_l)\in \bbz^l$, we define $\La_\kappa \in \sfp^+$ by $\La_\kappa = \La_{\overline{\kappa_1}} + \dots + \La_{\overline{\kappa_l}}$.
The \emph{residue} of a node $A = (r,c,t) \in[\la]$ is $\res A = \overline{\kappa_t + c - r}$.
If $\res A = i$, we call $A$ an $i$-node.

\begin{eg}
Let $\ell = 3$, $\kappa = (1,4)$ and $\la = ((8,3,2),(5,3,1))$.
Then the Young diagram $[\la]$, along with the residue pattern, is depicted below.
\begin{align*}
&\young(12321012,012,10)\\
&\young(21012,321,2)
\end{align*}
\end{eg}

A \emph{$\la$-tableau} is a bijection $\ttt: [\la] \to \{1,\dots, n\}$.
We depict $\ttt$ by filling each node $(r,c,t)$ with $\ttt(r,c,t)$.
We say that a $\la$-tableau is \emph{standard} if in each component, the entries increase along each row and down each column.
We denote by $\mathrm{Std}(\mptn l n)$ the set of all standard tableaux whose shape is an $l$-multipartition of $n$, and by $\mathrm{Std}^2(\mptn l n)$ the subset of $\mathrm{Std}(\mptn l n) \times \mathrm{Std}(\mptn l n)$ consisting of all pairs of standard tableaux of the same shape.

The distinguished tableau $\ttt^\la$ is obtained by filling nodes in order along rows, starting with the first row of $[\la^{(1)}]$ and working down the rows of this component before moving on to successive components.

A \emph{Garnir node} $A = (r,c,t) \in [\la]$ is a node for which $(r+1,c,t) \in [\la]$.
The corresponding \emph{Garnir belt} $\mathbf{B}^A$ is the set of nodes
\[
\{(r,c,t), (r,c+1,t), \dots, (r,\la^{(t)}_r,t)\} \cup \{(r+1,1,t), (r+1,2,t), \dots, (r+1,c,t)\}.
\]
We define the \emph{Garnir tableau} $\ttg^A$ to be the $\la$-tableau which agrees with $\ttt^\la$ outside of $\mathbf{B}^A$, with the entries in $\mathbf{B}^A$ in order from left to right along row $r+1$, and then row $r$.
See \cite[\S1.4]{aps} for examples.

The \emph{residue sequence} of a $\la$-tableau $\ttt$ is the sequence $\bfi^\ttt = (i_1, \dots, i_n)$, where $i_r = \res \ttt^{-1}(r)$.
We define $\bfi^\la = \bfi^{\ttt^\la}$.

We denote by $\shp\ttt$ the \emph{shape} of $\ttt$ -- i.e.~$\ttt$ is a $\shp\ttt$-tableau.
We let $\ttt_{\downarrow_m}$ denote the tableau obtained from $\ttt$ by deleting all entries greater than $m$.
Finally, we define the dominance order on tableaux by $\tts \domby \ttt$ if $\shp{\tts_{\downarrow_m}} \domby \shp{\ttt_{\downarrow_m}}$ for all $1\leq m \leq n$.

For each $w\in \sss$, we fix a preferred reduced expression $w = s_{i_1} \dots s_{i_r}$.
For $\ttt$ a $\la$-tableau, we define $w^\ttt \in \sss$ to be the permutation such that $w^\ttt \ttt^\la = \ttt$, where $\sss$ acts on tableaux by permuting entries.
If $w^\ttt = s_{i_1} \dots s_{i_r}$ is our preferred reduced expression for $w^\ttt$, we define the element $\psi_{w^\ttt} = \psi_{i_1} \dots \psi_{i_r} \in \scrr_n$.

\subsection{Specht modules}\label{section: specht}

We will briefly recall the definition of the (graded) Specht modules from \cite{aps}.
The reader should refer to \cite{aps} for a more thorough treatment of Specht modules, and of the graded module categories of $\scrr^\La_n$.

Fix a multicharge $\kappa \in \bbz^l$ and let $\la\in \mptn l n$.
For each Garnir node $A \in [\la]$ we may define the \emph{Garnir element} $\mathsf g^A \in \scrr_n$.
See \cite[\S3.2]{aps} for the definition of $\mathsf g^A$.

The graded Specht module $\spe\la_\kappa$ is the unital $\scrr_n$-module with generator $z^\la$ of degree $\deg \ttt^\la$ (see \cite[\S1.3]{aps}) subject to the relations

\begin{enumerate}[label=(\roman*)]
\item $e(\bfi) z^\la = z^\la$;
\item $x_r z^\la = 0$ for all $1\leq r \leq n$;
\item $\psi_r z^\la = 0$ whenever $r$ and $r+1$ lie in the same row of $\ttt^\la$;
\item $\mathsf g^A z^\la = 0$ for all Garnir nodes $A \in [\la]$.
\end{enumerate}

For each $\la$-tableau $\ttt$, we define $v^\ttt = \psi_{w^\ttt} z^\la \in \spe\la_\kappa$.

\begin{thmc}{aps}{Theorem~3.12}\label{spechtbasis}
The Specht module $\spe\la_\kappa$ is a graded $\scrr^\La_n$-module and is generated by $\{v^\ttt \mid \ttt \in \std\la\}$ as an $\calo$-module.
\end{thmc}

In type $C_\infty$, \cite[Theorem~3.19]{aps} tells us that the generating set in \cref{spechtbasis} is in fact a (homogeneous) basis, and we conjectured in \cite[Conjecture~5.3]{aps} that the same is true in type $C^{(1)}_\ell$.

For us, it will suffice to note that $\mathsf g^A = \psi_{w^{\ttg^A}}$ for all Specht modules we will consider.
Indeed, we have \cite[Equation~3.3]{aps}:
\[
\mathsf g^A = \psi_{w^{\ttg^A}} + \sum_w a_w \psi_w \quad \text{for some } a_w \in \calo,
\]
where the sum is taken over $\{w\in \sss \mid w < w^{\ttg^A}, \, \bfi^{w \ttt^\la} = \bfi^{\ttg^A}, \, w \ttt^\la \text{ is \emph{row-strict}}\}$.
In every Specht module we will consider in this paper, there is no row-strict $\la$-tableau which dominates $\ttg^A$ and has the same residue sequence as $\ttg^A$, for any Garnir node $A$.
In general, however, $\mathsf g^A$ will also include these terms indexed by more dominant tableaux with the same residue sequence.

The following lemma will be useful to us later.

\begin{lem}\label{gen action on basis}
Let $\la \in \mptn l n$.
Then we have the following actions of the generators of $\scrr_n$ on the $\calo$-generating set for $\spe\la$ in \cref{spechtbasis}.
\begin{enumerate}[label=(\roman*)]
\item Let $\ttt\in \std\la$ and $1\leq r\leq n$.
Then
\[
x_r v^\ttt = \; \sum_{\mathclap{\substack{\tts \in \std\la\\ \bfi^\tts = \bfi^\ttt \\ \tts \doms \ttt}}} \; a_\tts v^\tts \quad \text{for some } a_\tts \in \calo.
\]
\item Let $\ttt\in \std\la$ and $1\leq r< n$.
Then
\[
\psi_r v^\ttt = \; \sum_{\mathclap{\substack{\tts \in \std\la\\ \bfi^\tts = \bfi^{s_r \ttt} \\ \tts \doms \ttt}}} \; a_\tts v^\tts \quad \text{for some } a_\tts \in \calo,
\]
unless $s_r \ttt \in \std\la$ and $s_r w^\ttt$ is a reduced expression of length $\ell(w^\ttt) + 1$.
\end{enumerate}
\end{lem}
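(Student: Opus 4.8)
The plan is to analyse how each generator acts on the distinguished generator $z^\la$ and then propagate this through the elements $v^\ttt = \psi_{w^\ttt} z^\la$ using the Khovanov--Lauda--Rouquier relations, keeping track of the dominance order on tableaux at every step. The key tool is the standard "straightening" philosophy: whenever we apply a generator to $v^\ttt$ and rewrite the resulting word $\psi_w$ (with some $x$'s possibly interspersed) as a combination of the preferred $\psi_{w^\tts}$, each rewriting move via a braid or commutation relation either leaves the underlying permutation alone or replaces it by a shorter one, and shortening $w^\ttt$ corresponds exactly to passing to a tableau $\tts$ that is more dominant than $\ttt$ (this is the familiar fact, used throughout \cite{aps}, that $\ttt \domby \tts$ when $w^\tts$ is obtained from $w^\ttt$ by deleting generators compatibly). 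So the two claims amount to: (a) $x_r z^\la = 0$, and (b) $\psi_r$ commuting past $\psi_{w^\ttt}$, together with the defining relations of $\spe\la$, only ever produces terms $v^\tts$ with $\tts \doms \ttt$ of the correct residue sequence.

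For part (i), I would argue by induction on $\ell(w^\ttt)$, or equivalently on the dominance order on $\std\la$, with base case $\ttt = \ttt^\la$ where $x_r v^{\ttt^\la} = x_r z^\la = 0$ by relation~(ii) in the definition of $\spe\la$. For the inductive step write $v^\ttt = \psi_s v^{\tts}$ for an appropriate $\tts$ with $w^\ttt = s_a w^{\tts}$ reduced, $\ell(w^\ttt)=\ell(w^{\tts})+1$, and $\tts \domby \ttt$. Then $x_r \psi_a v^{\tts}$ is rewritten using the relations $x_r\psi_a = \psi_a x_{s_a(r)} \pm \delta$ (the sign/Kronecker-delta correction terms from $x_r\psi_r e(\bfi)$ and $x_{r+1}\psi_r e(\bfi)$, or plain commutation otherwise). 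The term $\psi_a x_{s_a(r)} v^{\tts}$ is handled by the inductive hypothesis applied to $\tts$, giving $\sum a_\ttu \psi_a v^\ttu$ with $\ttu \doms \tts$ of matching residue, and then $\psi_a v^\ttu$ is expanded by part (ii) (which I would prove first, or prove simultaneously in the same induction) to land in the span of $v^\ttv$ with $\ttv \doms \ttu \doms \tts$, hence $\ttv \doms \ttt$ after checking the dominance bookkeeping closes up; the correction term $\pm\delta \, v^{\tts}$ has $\tts \domby \ttt$, so strictly speaking one must check it actually has the right residue and is dominated by $\ttt$ — this is where one uses that $\tts\domby\ttt$ but $v^\tts$ appears with residue sequence $\bfi^\ttt$ forcing, via the residue pattern, that any such $\tts$ is in fact $\doms\ttt$ when it survives. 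I expect this delta-term bookkeeping to be the main obstacle: one has to rule out, or correctly account for, the "wrong-direction" term, and the cleanest way is to observe that the correction only appears when $i_r = i_{r+1}$ forces a collision that is incompatible with $\tts$ being standard of shape $\la$ unless $\tts$ already equals a more dominant tableau.

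For part (ii), again induct on $\ell(w^\ttt)$. If $s_r w^\ttt$ is reduced of length $\ell(w^\ttt)+1$ and $s_r\ttt\in\std\la$, then $\psi_r v^\ttt = v^{s_r\ttt}$ by definition of the preferred reduced expression (up to the choice conventions), which is the excluded case. Otherwise either $\ell(s_r w^\ttt) = \ell(w^\ttt)-1$, in which case write $w^\ttt = s_r w'$ reduced, so $\psi_r v^\ttt = \psi_r^2 v^{\ttt'}$ where $\ttt' = (w')\ttt^\la \domby \ttt$; now $\psi_r^2 e(\bfi)$ is one of $0$, $e(\bfi)$, or $(x_r \pm x_{r+1}^{(2)})e(\bfi)$-type expressions by the quadratic relation, and in every case we reduce to part (i) applied to $\ttt'$ (to handle the $x$'s) or directly to $v^{\ttt'}$ with $\ttt'\domby\ttt$, again needing the residue argument to upgrade $\domby$ to $\doms$; or $\ell(s_r w^\ttt)=\ell(w^\ttt)+1$ but $s_r\ttt\notin\std\la$, which is precisely a Garnir situation — here one uses relation~(iv) of $\spe\la$, namely $\mathsf g^A z^\la = 0$, together with the identity $\mathsf g^A = \psi_{w^{\ttg^A}} + \sum_{w<w^{\ttg^A}} a_w\psi_w$ from \cite[Eq.~3.3]{aps}, to express the "illegal" $\psi_r v^\ttt$ in terms of strictly more dominant standard tableaux. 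Alternatively, and perhaps more cleanly, cite that $\{v^\ttt\}$ spans $\spe\la$ (\cref{spechtbasis}), so $\psi_r v^\ttt = \sum_\tts a_\tts v^\tts$ for \emph{some} coefficients, and then the content of the lemma is purely the constraint $\tts \doms \ttt$ and $\bfi^\tts = \bfi^{s_r\ttt}$; the residue constraint is immediate from $\deg$- and $e(\bfi)$-homogeneity ($\psi_r e(\bfi) = e(s_r\bfi)\psi_r$), and the dominance constraint follows by the graded-cellular-type triangularity already implicit in the construction in \cite{aps}. I would present the self-contained inductive proof but remark that it also follows from the cellular structure. The genuinely delicate point throughout is confirming that every correction term genuinely respects $\doms$ (not merely $\domby$), and I would isolate this as a short sublemma about residue sequences before running the two inductions in parallel.
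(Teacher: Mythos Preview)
The paper does not give a self-contained proof at all: it simply says the argument is identical to \cite[Lemmas~4.8 and~4.9]{bkw11} and \cite[Lemma~2.14]{fs16} and cites those references. Your sketch is essentially a reconstruction of the content of those cited proofs, so in that sense you are on the right track and nothing substantive is missing from the strategy.

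However, you have the direction of the dominance order reversed throughout. In the paper's conventions, $\ttt^\la$ is the \emph{most} dominant standard tableau (with $w^{\ttt^\la}=1$), and shorter $w^\ttt$ corresponds to \emph{more} dominant $\ttt$; the correspondence is $\tts \dom \ttt \iff w^\tts \leq w^\ttt$ in Bruhat order. So when you write $v^\ttt = \psi_a v^\tts$ with $\ell(w^\tts) = \ell(w^\ttt)-1$, the tableau $\tts$ satisfies $\tts \doms \ttt$, not $\tts \domby \ttt$ as you repeatedly state. The same reversal appears in your treatment of $\ttt'$ in part~(ii).

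Once you fix this, the ``main obstacle'' you flag concerning the $\pm\delta$ correction term evaporates: that term is $\pm\delta_{i_a,i_{a+1}} v^\tts$ with $\tts$ already strictly more dominant than $\ttt$, and when the Kronecker delta is nonzero the residue sequences automatically agree (since $s_a\bfi^\tts = \bfi^\tts$). There is no need for a separate sublemma or residue argument to ``upgrade $\domby$ to $\doms$''. The genuinely fiddly part of the BKW/fs16 argument is rather that intermediate words $\psi_w z^\la$ need not correspond to standard tableaux, so the induction must be phrased for arbitrary $w$ (or arbitrary reduced expressions) rather than only for $w = w^\ttt$ with $\ttt$ standard; your sketch elides this point.
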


\begin{proof}
This is identical to \cite[Lemmas~4.8 and 4.9]{bkw11} and \cite[Lemma~2.14]{fs16}.
\end{proof}

\section{Semisimplicity of $\scrr^\La_n$}

Let $\ell \in \{2,3,\dots\} \cup \{\infty\}$, $\La\in \sfp^+$ be a dominant weight of level $l \in \bbz_{>0}$ and $n\in \bbz_{>1}$ so that we have the cyclotomic quiver Hecke algebra $\scrr^\La_n$.
Let $\kappa\in \bbz^l$ be any multicharge such that $\La = \La_\kappa$.

For $i\in I$ and $k\in \bbz_{>0}$, we set $\alpha^\vee_{i,k} = \alpha^\vee_i + \alpha^\vee_{i+1} + \dots + \alpha^\vee_{i+k-1}$, where the indices are taken modulo $\ell+1$.

The following two conditions will be key in our semisimplicity arguments, and we will refer back to them frequently.

\begin{enumerate}[label=(SS\arabic*)] 
\item\label{ss1} For all $i \in I$, $\langle\La,\alpha^\vee_{i,n}\rangle \leq 1$.
\item\label{ss2} For all $1\leq j\leq l$, $\tfrac{n-1}{2} \leq \overline{\kappa_j} \leq \ell - \tfrac{n-1}{2}$.
\end{enumerate}

\begin{rem}
The following observations are the driving force for this paper, and will be used frequently.
\begin{enumerate}
\item Suppose that \ref{ss1} holds, and let $\la \in \mptn l m$ for some $0\leq m < n$.
Then for any $i\in I$, $\mu$ has at most one component with addable $i$-nodes.
Informally, we may think of \ref{ss1} as ensuring that for any $\la \in \mptn l n$, nodes in distinct components of $[\la]$ must have distinct residues.
\item Suppose that \ref{ss2} holds, and let $\la \in \mptn l n$.
For a given residue $i\in I$, there is either only one possible diagonal of residue $i$ which may appear in the Young diagram of some partition, or there are two diagonals which, in any multipartition, may contain at most a single node each (in which case both nodes lie in the same row or the same column of the multipartition, and the residue is either $1$ or $\ell-1$).
\end{enumerate}
\end{rem}

\subsection{The semisimple case}

First, we will handle the case when $\scrr^\La_n$ is semisimple.
This subsection mirrors the corresponding type $A$ arguments of \cite[\S2.4]{m14surv}, which we have adapted to fit the type $C$ case.

\begin{lem}
\label{residue}
Suppose that conditions \ref{ss1} and \ref{ss2} hold, and let $\ttt,\tts \in \std{\mptn l n}$.
Then $\ttt = \tts$ if and only if $\bfi^\ttt = \bfi^\tts$.
\end{lem}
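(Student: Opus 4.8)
The plan is to prove the contrapositive of the "only if" direction by induction on $n$, building up the tableaux cell by cell, since the residue sequence of a standard tableau records exactly the residues of the nodes $\ttt^{-1}(1), \ttt^{-1}(2), \dots, \ttt^{-1}(n)$ in that order. The "if" direction is trivial (equal tableaux have equal residue sequences), so all the work is in showing that a standard tableau is determined by its residue sequence when \ref{ss1} and \ref{ss2} hold. First I would set up the induction: suppose $\bfi^\ttt = \bfi^\tts$ with $\ttt$ a $\la$-tableau and $\tts$ a $\mu$-tableau, $\la, \mu \in \mptn l n$. Restricting, $\ttt_{\downarrow_{n-1}}$ and $\tts_{\downarrow_{n-1}}$ are standard tableaux (of shapes in $\mptn{l}{n-1}$) with the same residue sequence $(i_1, \dots, i_{n-1})$, so by the inductive hypothesis $\ttt_{\downarrow_{n-1}} = \tts_{\downarrow_{n-1}}$; in particular $\la$ and $\mu$ agree except possibly at the single node added to form $[\la]$ resp.\ $[\mu]$ from this common sub-diagram. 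It remains to show that node is forced: both $\ttt^{-1}(n)$ and $\tts^{-1}(n)$ are addable nodes of residue $i_n$ for the common $(n-1)$-node diagram $[\nu] := [\ttt_{\downarrow_{n-1}}]$, and I must show there is only one such addable node.

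The key step is therefore: \emph{under \ref{ss1} and \ref{ss2}, a standard $\nu$-tableau diagram with $\nu \in \mptn{l}{m}$, $m < n$, has at most one addable node of any given residue $i \in I$.} This is exactly where the two "driving force" observations in the Remark come in. By observation (1) (consequence of \ref{ss1}), at most one component $\nu^{(t)}$ of $\nu$ has an addable $i$-node at all, so I am reduced to a single partition $\nu^{(t)}$ with its own multicharge-shifted residue function. Within one partition, addable nodes of a fixed residue $i$ lie on diagonals of constant content $c - r \equiv (\text{something})$, and observation (2) (consequence of \ref{ss2}) says that for each residue there is essentially one such diagonal available inside a diagram of size $< n$ — or else two diagonals each of which can hold at most one node, with those two nodes forced to share a row or a column. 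In the first case a partition diagram has at most one addable node on a given diagonal, so we are done. In the second case I need the extra remark that the two candidate nodes lie in the same row or column: if both were addable to $[\nu^{(t)}]$ and they lay in the same row, addability would force that row to end at both columns simultaneously, impossible; similarly for a column. Hence again at most one is addable, so $\ttt^{-1}(n) = \tts^{-1}(n)$, giving $\la = \mu$ and $\ttt = \tts$.

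I would present this carefully: state the single-partition claim as an intermediate observation, cite observations (1) and (2) from the Remark for the reductions, and then dispatch the two-diagonal case by the elementary row/column argument. The base case $n = 1$ (or $n = 0$) is immediate since a one-node standard tableau is determined by which addable node of the empty multipartition it occupies, and \ref{ss1} forces the residues of the $l$ addable nodes $(1,1,t)$ to be distinct so the residue $i_1$ pins down $t$.

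The main obstacle I anticipate is the two-diagonal case arising from residues $1$ and $\ell-1$ in finite type $C^{(1)}_\ell$: one must be careful that the bound $m < n$ together with \ref{ss2} genuinely prevents a partition from having addable nodes on \emph{both} of the two diagonals unless they coincide in a row or column, and must handle the interaction with \ref{ss1} correctly (the two diagonals could a priori live in different components). Getting the bookkeeping of the residue function $\res(r,c,t) = \overline{\kappa_t + c - r}$ exactly right — in particular which values of $c - r$ map to a given $i$ under $\overline{\phantom{k}}$, and how the constraint $\tfrac{n-1}{2} \le \overline{\kappa_j} \le \ell - \tfrac{n-1}{2}$ restricts the reachable diagonals in a diagram of size at most $n-1$ — is the delicate part; everything else is a routine induction.
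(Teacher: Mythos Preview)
Your proposal is correct and follows essentially the same approach as the paper: both argue by induction on $n$, reducing to the claim that for $\nu \in \mptn{l}{m}$ with $m<n$ the diagram $[\nu]$ has at most one addable $i$-node for each $i\in I$, which is precisely the content of the Remark's observations (1) and (2). The paper's proof is a single sentence citing that Remark; your write-up simply unpacks the same argument in more detail, including the two-diagonal bookkeeping, which the paper leaves implicit.
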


\begin{proof}
If $i\in I$ and $\la \in \mptn l m$ for some $0\leq m < n$.
Then by the above remark, $[\la]$ has at most one addable $i$-node, and the result follows by induction on $n$.
%If $i\in I$ and $\la \in \mptn l m$ for some $0\leq m < n$, then $\mu$ has at most one component with addable $i$-nodes by \ref{ss1}.
%Furthermore, \ref{ss2} ensures that any component contains either one diagonal of any given residue, or two diagonals containing a single node each (in which case both nodes lie in the same row or the same column of the multipartition, and the residue is either $1$ or $\ell-1$).
%Thus $[\la]$ has at most one addable $i$-node, and the result follows by induction on $n$.
\end{proof}

Let $I^n_\La = \{\bfi^\ttt \mid \ttt \in \std{\mptn l n}\}$.

\begin{cor}
\label{leaveresidue}
Suppose that conditions \ref{ss1} and \ref{ss2} hold, and let $\bfi \in I^n_\La$ such that $i_{r+1} = i_r \pm 1$.
Then $s_r \bfi \notin I^n_\La$.
\end{cor}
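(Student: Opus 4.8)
The plan is to argue by contradiction. Suppose $s_r\bfi \in I^n_\La$, so there is a standard tableau $\tts$ with $\bfi^\tts = s_r\bfi$; since also $\bfi \in I^n_\La$, fix a standard tableau $\ttt$ with $\bfi^\ttt = \bfi$. The residue sequences of $\ttt$ and $\tts$ agree in positions $1,\dots,r-1$, so \cref{residue} applied to the restrictions gives $\ttt_{\downarrow_{r-1}} = \tts_{\downarrow_{r-1}}$; let $\mu\in\mptn l{r-1}$ be the shape of this common tableau, and note that $|\mu| = r-1 \le n-2 < n$, so the remark above applies to $\mu$. The node $\ttt^{-1}(r)$ is an addable $i_r$-node of $\mu$, while $\tts^{-1}(r)$ is an addable $i_{r+1}$-node of $\mu$. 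As $i_{r+1} = i_r\pm1$, it therefore suffices to prove: \emph{under \ref{ss1} and \ref{ss2}, a multipartition of size less than $n$ cannot simultaneously possess an addable node of residue $i$ and an addable node of residue $i'$ with $|i-i'|=1$}; for then no such $\tts$ can exist.

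First I would dispose of the case in which the two addable nodes $A$ (residue $i$) and $B$ (residue $i'$) lie in different components, say $A\in[\mu^{(s)}]$ and $B\in[\mu^{(t)}]$ with $s\neq t$. An addable node of a partition $\nu$ lies on a diagonal whose index has absolute value at most $|\nu|$, and $k\mapsto\overline k$ is a folded triangle wave, hence $1$-Lipschitz; combining these, $|\overline{\kappa_s}-i|\le|\mu^{(s)}|$ and $|\overline{\kappa_t}-i'|\le|\mu^{(t)}|$, so $|\overline{\kappa_s}-\overline{\kappa_t}|\le|\mu^{(s)}|+1+|\mu^{(t)}|\le|\mu|+1\le n-1$. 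But \ref{ss1} forces the $\overline{\kappa_j}$ to be pairwise distinct, and since \ref{ss2} confines them to $[\tfrac{n-1}2,\ell-\tfrac{n-1}2]$ (in particular $n\le\ell+1$, so a window of $n$ consecutive residues is a proper arc), \ref{ss1} in fact forces them to be pairwise at distance at least $n$; contradiction.

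It remains to handle the case where $A$ and $B$ lie in one component; write $\nu=\mu^{(t)}$ with $|\nu|<n$, and let $A$, $B$ sit on diagonals of contents $c_A<c_B$, so $\overline{c_A}=i$ and $\overline{c_B}=i'$. Along the rim of a partition the addable and removable nodes strictly interleave in diagonal index, beginning and ending with addable nodes, so there is a removable node of $\nu$ with content strictly between $c_A$ and $c_B$; in particular $c_B-c_A\ge2$. Since $k\mapsto\overline k$ has slope $\pm1$, the equation $1=|i-i'|=|\overline{c_A}-\overline{c_B}|$ combined with $c_B-c_A\ge2$ forces $[c_A,c_B]$ to contain a content of residue $0$ or $\ell$ -- a ``turn'' of the triangle wave. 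As $A$ and $B$ are addable to $\nu$, the diagonals meeting $[\nu]$ form an integer interval containing every diagonal strictly between those of $A$ and $B$, hence containing this turn; thus $[\nu]$ has a node of residue $0$ or $\ell$ whose content lies within $|\nu|-1$ of $\kappa_t$. I would then play this off against $\overline{\kappa_t}\in[\tfrac{n-1}2,\ell-\tfrac{n-1}2]$ and against a lower bound on $|\nu|$ forced by $A$ and $B$ lying so far apart in diagonal index, to reach a contradiction. (A short separate sub-case occurs when the turn is at $c_A$ or $c_B$ itself -- i.e.\ $i$ or $i'$ equals $0$ or $\ell$ -- and is dealt with the same way.) This fold sub-case is where I expect the real difficulty: it is precisely the exceptional configuration singled out in the second observation of the remark above -- two distinct diagonals of a common residue (necessarily $1$ or $\ell-1$), each carrying at most one node -- and ruling it out cleanly will require careful bookkeeping of diagonal indices against the bounds in \ref{ss2}. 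By contrast, the reduction and the different-component case should be routine. Granting the claim, no such $\tts$ exists, so $s_r\bfi\notin I^n_\La$, as required.
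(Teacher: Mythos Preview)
Your reduction and the different-component case are correct, but the same-component case --- which you yourself identify as ``the real difficulty'' --- is not actually proved: you locate a fold of the residue function between $c_A$ and $c_B$ and then defer the contradiction to unspecified ``careful bookkeeping of diagonal indices against the bounds in \ref{ss2}''. That bookkeeping is the entire content of the argument in this case, so as written the proof has a genuine gap exactly where you predicted it would.

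The paper sidesteps this analysis. Rather than proving your strong claim that $\mu$ cannot have addable nodes of adjacent residues, it works directly with $\ttt$: by the remark preceding \cref{residue}, the entries $r$ and $r+1$ in $\ttt$ must lie on adjacent diagonals, hence in the same row or the same column. Since $\ttt_{\downarrow r-1}=\tts_{\downarrow r-1}$ and each residue admits at most one addable node (again the remark), any $\tts$ with $\bfi^\tts=s_r\bfi$ would place $r$ and $r+1$ in the very same pair of nodes as $\ttt$, merely transposed; but $s_r\ttt$ is not standard. The remark already packages the outcome of your fold analysis --- adjacent residues at consecutive positions of a standard tableau force adjacent diagonals --- so the paper can invoke it in one line instead of carrying out the content-counting you left undone.
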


\begin{proof}
By the above remark, if $\bfi = \bfi^\ttt$ for some $\ttt\in \std{\mptn l n}$, and $i_{r+1} = i_r \pm 1$, then $r$ and $r+1$ must lie in adjacent diagonals of $\ttt$.
In particular, they must lie in either the same row or the same column of $\ttt$.
Given that the number of residues equal to $i_r$ and $i_{r+1}$ in $(i_1,\dots,i_{r-1})$ is unchanged, we deduce that $r$ and $r+1$ must occupy the same two pair of nodes in $\ttt$ as in any standard tableau with residue sequence $s_r \bfi$.
But this is a contradiction, as such a tableau cannot be standard.
\end{proof}

Recall that we have fixed a multicharge $\kappa\in \bbz^l$ such that $\La = \La_\kappa$.

\begin{thm}\label{irred}
Suppose that $\calo = \bbf$ is a field, and that conditions \ref{ss1} and \ref{ss2} hold.
Then for each $\la\in\mptn l n$ there is an irreducible graded $\scrr^\La_n$-module $\rspe\la_\kappa$ with homogeneous basis $\{v^\ttt\mid \ttt\in \std\la\}$ such that $\deg v^\ttt = 0$ for all $\ttt\in\std\la$, and the $\scrr^\La_n$-action is given by
\[
e(\bfi)v^\ttt = \delta_{\bfi,\bfi^\ttt} v^\ttt, \quad x_r v^\ttt = 0, \quad \psi_r v^\ttt = v^{s_r\ttt},
\]
where we set $v^{s_r\ttt} = 0$ if $s_r\ttt$ is not standard.
\end{thm}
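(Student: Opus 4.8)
The plan is to show that the vector space $\rspe\la_\kappa$ with basis $\{v^\ttt \mid \ttt \in \std\la\}$ and the stated formulas is a well-defined $\scrr^\La_n$-module, and then that it is irreducible. First I would verify that the assignments $e(\bfi) v^\ttt = \delta_{\bfi,\bfi^\ttt} v^\ttt$, $x_r v^\ttt = 0$, $\psi_r v^\ttt = v^{s_r\ttt}$ respect all the defining relations of $\scrr^\La_n$. Since every $x_r$ acts as zero, the cyclotomic relation and all relations involving $x$'s are immediate; the relations $\psi_r x_s = x_s \psi_r$ (various cases) and $x_r\psi_r e(\bfi) = (\psi_r x_{r+1} - \delta_{i_r,i_{r+1}})e(\bfi)$ reduce to checking that the "correction" terms $\delta_{i_r,i_{r+1}}$ act as zero on each $v^\ttt$ — but if $\bfi^\ttt$ has $i_r = i_{r+1}$ then $r, r+1$ lie in equal-residue nodes and $s_r\ttt$ is never standard (so $\psi_r v^\ttt = 0$), and more importantly the $\delta$ term hits $v^\ttt$ which already has $x$-action zero. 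The genuinely substantive relations to check are the quadratic relation $\psi_r^2 e(\bfi)$ and the braid relation $\psi_{r+1}\psi_r\psi_{r+1} e(\bfi)$. For the quadratic relation: all the right-hand sides are polynomials in the $x$'s except the last case ($e(\bfi)$, "otherwise"), so $\psi_r^2 v^\ttt = 0$ in every case except when $(i_r, i_{r+1})$ falls under "otherwise", where we need $\psi_r^2 v^\ttt = v^\ttt$, i.e.\ $s_r(s_r\ttt) = \ttt$ with $s_r\ttt$ standard. The key point is that under \ref{ss1} and \ref{ss2}, Corollary~\ref{leaveresidue} rules out the case $i_{r+1} = i_r \pm 1$ entirely for residue sequences in $I^n_\La$, so the only surviving "otherwise" case is $|i_r - i_{r+1}|$ large (or the components differ), in which case $s_r\ttt$ is automatically standard whenever $\ttt$ is, and vice versa — this is where I expect the bulk of the careful casework to live.

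For the braid relation $\psi_{r+1}\psi_r\psi_{r+1}e(\bfi)$, I would argue similarly: the first two cases of the right-hand side require residues with $i_r = i_{r+2} = i_{r+1}\pm 1$, which by Corollary~\ref{leaveresidue} cannot occur for $\bfi \in I^n_\La$, so acting on any $v^\ttt$ only the "otherwise" case $\psi_{r}\psi_{r+1}\psi_{r}e(\bfi)$ is relevant, and there it reduces to the statement that $s_{r+1}s_r s_{r+1} = s_r s_{r+1}s_r$ in $\sss$ together with the fact that standardness of the intermediate tableaux is compatible — again using that adjacent entries with non-adjacent residues can be freely swapped. The relations $e(s_r\bfi)\psi_r = \psi_r e(\bfi)$ and $\sum_\bfi e(\bfi) = 1$ are straightforward from the definitions. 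This establishes that $\rspe\la_\kappa$ is a well-defined graded module (the grading is forced to be concentrated in the degree making $\deg v^{\ttt^\la} = \deg \ttt^\la$, and since all $v^\ttt$ have the same residue-sequence-independent degree-$0$ label we simply declare $\deg v^\ttt = 0$; one should double-check this is consistent with the grading on $\scrr^\La_n$, i.e.\ that $\psi_r$ has the right degree, which holds because $\deg \psi_r e(\bfi) = (\alpha_{i_r},\alpha_{i_{r+1}})$ and we only ever apply $\psi_r$ when this degree is $0$ — precisely the cases not excluded above).

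Finally, for irreducibility: any nonzero submodule $M$ contains some element, hence (applying the idempotents $e(\bfi)$, which act as projections onto the spans of the $v^\ttt$ with $\bfi^\ttt = \bfi$, and using Lemma~\ref{residue} which says the map $\ttt \mapsto \bfi^\ttt$ is injective on $\std{\mptn l n}$) contains some single basis vector $v^\tts$. Then applying products of $\psi_r$'s, I claim one can reach any $v^\ttt$: since $w^\ttt \ttt^\la = \ttt$ and the symmetric group is generated by the $s_r$, it suffices to note that every standard $\la$-tableau is connected to $\ttt^\la$ by a sequence of elementary transpositions $s_r$ passing only through standard tableaux (a standard fact about the graph of standard tableaux under adjacent transpositions), and along such a path each $\psi_r$ acts as the bijection $v^\ttt \mapsto v^{s_r\ttt}$. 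Hence $M = \rspe\la_\kappa$, proving irreducibility. The main obstacle is the first part — organizing the verification of the quadratic and braid relations so that Corollary~\ref{leaveresidue} is invoked cleanly to eliminate exactly the problematic cases — rather than the irreducibility argument, which is routine once well-definedness is in hand.
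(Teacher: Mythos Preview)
Your overall strategy matches the paper's: verify the defining relations (with the quadratic and braid relations being the substantive ones) and then deduce irreducibility via \cref{residue}. The irreducibility argument is fine and essentially the paper's.

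There is, however, a genuine gap in your handling of the braid relation. You assert that the patterns $i_r = i_{r+2} = i_{r+1}\pm 1$ ``by Corollary~\ref{leaveresidue} cannot occur for $\bfi \in I^n_\La$''. This misreads the corollary: \cref{leaveresidue} says that if $i_{r+1} = i_r \pm 1$ then $s_r\bfi \notin I^n_\La$; it does \emph{not} say that adjacent residues cannot appear consecutively in $\bfi$ itself. In fact the boundary patterns $(1,0,1)$ and $(\ell-1,\ell,\ell-1)$ \emph{do} arise under \ref{ss1} and \ref{ss2} (e.g.\ $l=1$, $\overline{\kappa_1}=1$, $n=3$, $\la=(1^3)$ gives $\bfi^\la=(1,0,1)$). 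The paper handles this correctly in two steps: first it invokes the residue structure (the remark preceding \cref{residue}) to argue that the \emph{non-boundary} pattern $i_r = i_{r+2} = i_{r+1}\pm1$ with $i_{r+1}\neq 0,\ell$ is genuinely impossible; second, for the surviving boundary patterns it checks directly that $\psi_{r+1}\psi_r\psi_{r+1}v^\ttt = \psi_r\psi_{r+1}\psi_r v^\ttt = 0$ (since $s_r\ttt$ and $s_{r+1}\ttt$ are non-standard, now legitimately by \cref{leaveresidue}) and that the error term $(x_r+x_{r+2})v^\ttt$ also vanishes. This case split is not optional: if the second braid case $(\psi_r\psi_{r+1}\psi_r + 1)e(\bfi)$ could ever apply, the relation would fail, since both $\psi$-triples act as zero but the $+1$ does not.

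The same misreading appears, less dangerously, in your quadratic-relation discussion: \cref{leaveresidue} does not ``rule out the case $i_{r+1}=i_r\pm1$ entirely''; what it gives is that $s_r\ttt$ is non-standard in that case, whence $\psi_r v^\ttt=0$ and $\psi_r^2 v^\ttt=0$, matching the $x$-polynomial right-hand side. That is the paper's argument; yours reaches the right conclusion but for the wrong stated reason.
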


\begin{proof}
We first check that the relations above really define an $\scrr^\La_n$-module.
Almost all the defining relations for $\scrr^\La_n$ are trivially satisfied, thanks to \cref{residue,leaveresidue}.
We must check that the $\psi$ generators satisfy the braid relations and the quadratic relations when acting on basis elements.
Let $\la \in \mptn l n$, $\ttt \in \std\la$ and set $\bfi = \bfi^\ttt = (i_1,\dots, i_n)$.

For the braid relations, \ref{ss1} and \ref{ss2} ensure that we never have $i_r = i_{r+2} = i_{r+1} \pm 1$ with $i_{r+1} \neq0,\ell$.
To see this, we again invoke our remark made after introducing conditions \ref{ss1} and \ref{ss2}.
Since we can only have a single diagonal of any residue besides $1$ and $\ell$, it is not possible for the (arbitrarily chosen) standard tableau $\ttt$ to have consecutive residues $i, i\pm1, i$, except for $(1,0,1)$ and $(\ell-1, \ell, \ell-1)$.
Finally, if $(i_r, i_{r+1}, i_{r+2}) = (1,0,1)$ or $(\ell-1, \ell, \ell-1)$, then we have $\psi_{r+1} \psi_r \psi_{r+1} v^\ttt = \psi_r \psi_{r+1} \psi_r v^\ttt = 0$.

%For the braid relations, \ref{ss1} and \ref{ss2} ensure that we never have $i_r = i_{r+2} = i_{r+1} \pm 1$ with $i_{r+1} \neq0,\ell$, and that when $(i_r, i_{r+1}, i_{r+2}) = (1,0,1)$ or $(\ell-1, \ell, \ell-1)$, $\psi_{r+1} \psi_r \psi_{r+1} v^\ttt = \psi_r \psi_{r+1} \psi_r v^\ttt = 0$.

Since $i_{r+1} \neq i_r$ for any $r$ and $i_{r+1} = i_r \pm 1$ if and only if $r$ and $r+1$ are in the same row or column of $\ttt$, it follows from \cref{leaveresidue} that $\psi_r^2 v^\ttt = 0$ when $i_{r+1} = i_r \pm 1$.

These residue conditions also tell us that $\deg \psi_r e(\bfi) = 0$ whenever $s_r \ttt \in \std\la$ (and if $s_r \ttt \notin \std\la$, $\psi_r e(\bfi) = 0$ by \cref{leaveresidue}).
Thus setting $\deg v^\ttt = 0$ gives a grading on $\rspe\la_\kappa$.

Finally, we show that $\rspe\la_\kappa$ is irreducible.
If $\tts,\ttt \in \std\la$, then $\tts = w^\tts \ttt^\la = w^\tts (w^\ttt)^{-1} \ttt$.
So $v^\tts = \psi_{w^\tts} \psi_{(w^\ttt)^{-1}} v^\ttt$.
Take a non-zero element $v = \sum_{\ttt\in\std\la} a_\ttt v^\ttt \in \rspe\la_\kappa$.
If $a_\ttt \neq 0$ then, by \cref{residue}, $v^\ttt = \tfrac{1}{a_\ttt} e(\bfi^\ttt) v$, and therefore for any $\tts \in \std\la$, $v^\tts \in \scrr^\La_n v$.
It follows that $\rspe\la_\kappa$ is irreducible.
\end{proof}

\begin{rem}
The modules $\rspe\la_\kappa$ are easily seen to be isomorphic to the Specht modules $\mathcal{S}^\la_\kappa$ constructed in \cref{section: specht}, providing evidence for the importance of the Specht modules constructed in \cite{aps}.
Indeed, as remarked after \cref{spechtbasis}, we know that $\mathsf g^A = \psi_{w^{\ttg^A}}$, and this is sufficient to prove that $\mathcal{S}^\la_\kappa$ has a basis indexed by standard tableaux (the elements constructed in \cite[Theorem 3.12 and Corollary 3.13]{aps}), showing that the dimensions match.
By the definition of $\mathcal{S}^\la_\kappa$, the cyclic generator $z^\la$ satisfies the same relations as the element $v^{\ttt^\la}$ constructed in \cref{irred}, so that we have an isomorphism $\rspe\la_\kappa \rightarrow \mathcal{S}^\la_\kappa$ determined by $v^{\ttt^\la} \mapsto z^\la$.
\end{rem}

If $\bfi = (i_1,\dots, i_n) \in I^n$, we set $\bfi_{\downarrow_r} = (i_1, \dots, i_r)$.

\begin{lem}
\label{neighbourres}
Suppose that conditions \ref{ss1} and \ref{ss2} hold, and let $\bfi \in I^n$.
Then $\bfi \in I^n_\La$ if and only if $\bfi$ satisfies the following three conditions.
\begin{enumerate}[label=(\roman*)]
\item $\langle \La,\alpha^\vee_{i_1} \rangle \neq 0$.
\item If $1< r \leq n$ and $\langle \La, \alpha^\vee_{i_r} \rangle = 0$, then $\{\overline{i_r - 1}, \overline{i_r + 1}\} \cap \{i_1, \dots, i_{r-1}\} \neq \emptyset$.
\item Let $1\leq s < r \leq n$.
If $i_r = i_s\neq 1,\ell-1$, then $\{\overline{i_r - 1}, \overline{i_r + 1}\} \subseteq \{i_{s+1}, \dots, i_{r-1}\}$.
If $i_r = i_s = 1$, then $0 \in \{i_{s+1}, \dots, i_{r-1}\}$.
If $i_r = i_s = \ell-1$, then $\ell \in \{i_{s+1}, \dots, i_{r-1}\}$.
\end{enumerate}
\end{lem}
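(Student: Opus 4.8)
The plan is to recast membership in $I^n_\La$ in terms of addable nodes and to induct on $n$. The starting observation is that $\bfi \in I^n_\La$ if and only if $\bfi_{\downarrow_{n-1}} \in I^{n-1}_\La$ and the Young diagram of the standard tableau $\tts$ with $\bfi^\tts = \bfi_{\downarrow_{n-1}}$---unique by \cref{residue}---has an addable $i_n$-node; in that case, inserting the entry $n$ into that node gives a standard tableau with residue sequence $\bfi$. Since conditions (i)--(iii) are inherited by $\bfi_{\downarrow_{n-1}}$, it suffices to show that, given $\bfi_{\downarrow_{n-1}} \in I^{n-1}_\La$ witnessed by $\tts$ with $\mu = \shp\tts$, the diagram $[\mu]$ has an addable $i_n$-node precisely when the instances of (i)--(iii) involving the index $n$ hold.

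For the forward implication I would argue as follows: suppose $\bfi = \bfi^\ttt$. Since $\ttt^{-1}(1)$ is the node $(1,1,t)$ of some component $t$, we have $i_1 = \overline{\kappa_t}$ and hence $\langle\La,\alpha^\vee_{i_1}\rangle \geq 1$, giving (i). If $\langle\La,\alpha^\vee_{i_n}\rangle = 0$ then $\ttt^{-1}(n)$ is not the corner of any component, so it has a node immediately above it or immediately to its left; that node carries a smaller entry and residue $\overline{i_n+1}$ or $\overline{i_n-1}$, giving (ii). For (iii), if $i_r = i_s$ with $s < r$ then, by the remark following \ref{ss1} and \ref{ss2}, the nodes $\ttt^{-1}(s)$ and $\ttt^{-1}(r)$ lie in the same component, and either (in the generic case $i_r \notin \{1,\ell-1\}$) on a common diagonal---so that $\ttt^{-1}(r)$ lies strictly south-east of $\ttt^{-1}(s)$, and the nodes just above and just to the left of $\ttt^{-1}(r)$ have entries strictly between $s$ and $r$ with residues $\overline{i_r+1}$ and $\overline{i_r-1}$---or (when $i_r \in \{1,\ell-1\}$) on the two distinct residue-$i_r$ diagonals, hence in a common row or column with a single node of residue $0$ (resp.\ $\ell$) strictly between them.

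For the converse I would induct using the following sublemma: if $\mu$ is the shape of a standard tableau with residue sequence $\bfi_{\downarrow_{n-1}}$ and (i)--(iii) hold, then $[\mu]$ has an addable $i_n$-node. When $\langle\La,\alpha^\vee_{i_n}\rangle \neq 0$, one exhibits such a node on a residue-$i_n$ diagonal of the component $t$ with $\overline{\kappa_t} = i_n$, using (iii) to rule out that this diagonal is already ``saturated''. When $\langle\La,\alpha^\vee_{i_n}\rangle = 0$, condition (ii) produces a node of residue $\overline{i_n+1}$ or $\overline{i_n-1}$ in $[\mu]$; following the adjacent residue-$i_n$ diagonal and again invoking (iii) yields the required addable node. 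Inserting $n$ there produces a standard tableau realising $\bfi$, so $\bfi \in I^n_\La$.

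The main obstacle is this sublemma: showing that a residue-$i_n$ diagonal is never ``full'' when (i)--(iii) hold. This is precisely where condition (iii)---together with the sharp control on diagonals provided by the remark, and ultimately by \ref{ss2}---enters, and it requires a careful case analysis of the behaviour near the folded ends of the quiver, i.e.\ around residues $0$ and $\ell$ and their neighbours $1$ and $\ell-1$, with the degenerate coincidence $1 = \ell-1$ when $\ell = 2$ handled separately. Once the diagonal combinatorics are pinned down, each case is a short, direct verification.
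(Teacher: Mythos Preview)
Your proposal is correct and follows essentially the same approach as the paper's proof: both argue by induction on the length of the residue sequence, reducing membership in $I^n_\La$ to the existence of an addable $i_n$-node on the shape determined (uniquely, via \cref{residue}) by $\bfi_{\downarrow_{n-1}}$, and both handle the forward and converse directions in the same way, with the key step in the converse being the use of condition~(ii) to locate a neighbouring-residue node and condition~(iii) to guarantee the relevant diagonal is not saturated. The paper's write-up of the converse is only marginally more explicit than your sketch of the sublemma, and does not in fact single out the case $\ell=2$; the argument goes through uniformly.
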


\begin{proof}
Let $\ttt \in \std{\mptn l n}$ with $\bfi^\ttt = \bfi$.
We prove by induction on r that $\bfi_{\downarrow_r} \in I^r_\La$ satisfies all three conditions as claimed.
By definition, $i_1 = \overline{\kappa_j}$ for some $j$, so \textit{(i)} holds.
By induction, we assume that $\bfi_{\downarrow_{r-1}}$ satisfies \textit{(i)--(iii)}.
If $\langle \La, \alpha^\vee_{i_r}\rangle = 0$, then $r$ is not in the $(1,1)$ node of any component of $\ttt$, so $\ttt$ has an entry directly above or to the left of $r$, so \textit{(ii)} holds.
Now suppose that $i_r = i_s\neq 1,\ell-1$ are as in the first part of \textit{(iii)}.
Condition \ref{ss1} ensures that residues in different components are distinct, so that $r$ and $s$ must be in the same component of $\ttt$.
Condition \ref{ss2} ensures that $r$ and $s$ are on the same diagonal, so that $r$ is not in the first row or first column of the component, so \textit{(iii)} holds.
Finally, suppose that $i_r = i_s = 1$ or $\ell-1$.
Then we have $r$ and $s$ both appearing in the first row or both appearing in the first column of $\ttt$, so that $\{i_{s+1}, \dots, i_{r-1}\}$ contains $0$ if $i_r = 1$, or $\ell$ if $i_r = \ell-1$, proving the second and third statements in \textit{(iii)}.

Conversely, suppose that $\bfi\in I^n$ satisfies conditions \textit{(i)--(iii)}.
We show by induction on $r$ that $\bfi_{\downarrow_r} \in I^r_\La$ for $1\leq r \leq n$.
If $r=1$, \textit{(i)} implies that $\bfi_{\downarrow_r} \in I^r_\La$.
So suppose by the induction hypothesis that for some $1< r <n$, $\bfi_{\downarrow_r} = \bfi^\tts$ for some $\tts \in \std{\mptn l r}$.
Let $\la = \shp\tts$.
From the proof of \cref{residue}, we know that for any $i\in I$, $[\la]$ has at most one addable $i$-node.

If $\langle \La, \alpha^\vee_{i_{r+1}}\rangle = 0$, then by \textit{(ii)}, $\la$ contains either an $(\overline{i_{r+1}-1})$-node or $(\overline{i_{r+1}+1})$-node (or both).
Thus either there is an addable $i_{r+1}$-node in the first row or first column of the corresponding component of $\la$, or else there is some $1\leq s < r+1$ such that $i_s = i_{r+1}$.
By \ref{ss2}, if there is no addable $i_{r+1}$-node in the first row or column, then $1< i_{r+1} < \ell-1$ in this case, and condition \textit{(iii)} tells us that there is again an addable $i_{r+1}$-node.

If $\langle \La, \alpha^\vee_{i_{r+1}}\rangle = 1$, then either the $(1,1)$ node of some component of $[\la]$ is an addable $i_{r+1}$-node, or $[\la]$ already contains a $(1,1)$ node which has residue $i_{r+1}$.
In the latter case, \textit{(iii)} implies that $[\la]$ has an addable $i_{r+1}$-node.

Thus we know that $[\la]$ has precisely one addable $i_{r+1}$-node, which we shall denote by $A$.
Then $\bfi_{\downarrow_{r+1}} = \bfi^\ttt$ where $\ttt$ is the unique standard tableau satisfying $\ttt_{\downarrow_r} = \tts$ and $\ttt (A) = r+1$.
Hence $\bfi_{\downarrow_{r+1}} \in I^{r+1}_\La$ and the proof is complete.
\end{proof}

The following lemma follows easily from the rank formula for $e(\bfi) \scrr^\La_n e(\bfi)$ given in~\cite[Theorem~2.5]{aps}, and does not require that \ref{ss1} and \ref{ss2} are satisfied.

\begin{lem}\label{idemps0}
If $\bfi \in I^n \setminus I^n_\La$, then $e(\bfi) = 0$ in $\scrr^\La_n$.
\end{lem}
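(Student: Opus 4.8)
The plan is to deduce this from the rank formula for $e(\bfi)\scrr^\La_n e(\bfi)$ in \cite[Theorem~2.5]{aps}. That formula expresses $\dim e(\bfi)\scrr^\La_n e(\bfi)$ (or the graded rank) as a sum over certain combinatorial data attached to $\bfi$, and the key point is that this sum is empty — hence the rank is zero — precisely when $\bfi$ fails to be the residue sequence of any standard tableau of an $l$-multipartition of $n$. Since $e(\bfi) = e(\bfi)\cdot 1 \cdot e(\bfi) \in e(\bfi)\scrr^\La_n e(\bfi)$, having $e(\bfi)\scrr^\La_n e(\bfi) = 0$ forces $e(\bfi) = 0$.

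Concretely, I would first recall the statement of \cite[Theorem~2.5]{aps}: the (graded) dimension of $e(\bfi)\scrr^\La_n e(\bfi)$ is given by a sum indexed by pairs (or by paths in the relevant crystal / by addable-node sequences) that realise $\bfi$ as a residue sequence built up one node at a time, starting from the empty multipartition and respecting the cyclotomic weight $\La$. In other words, a nonzero contribution requires that $\bfi$ arise as $\bfi^\ttt$ for some $\ttt \in \std{\mptn l n}$. Thus if $\bfi \in I^n \setminus I^n_\La$, every index set in the formula is empty, so $e(\bfi)\scrr^\La_n e(\bfi)$ is the zero ring; in particular its identity element $e(\bfi)$ is $0$.

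The only subtlety — and the step I would be most careful about — is matching the indexing set of the rank formula in \cite[Theorem~2.5]{aps} with the set $I^n_\La$ as defined here (namely $\{\bfi^\ttt \mid \ttt \in \std{\mptn l n}\}$). This is really a bookkeeping check: one must confirm that ``$\bfi$ indexes a nonzero term in the rank formula'' is equivalent to ``$\bfi$ is the residue sequence of a standard tableau of shape in $\mptn l n$''. This equivalence is built into the construction of Specht modules in \cite{aps} and is exactly the content one expects, so no new argument is needed beyond citing it; I would simply remark that the rank formula vanishes on $I^n \setminus I^n_\La$ and conclude. Note, as the paper observes, that this argument uses neither \ref{ss1} nor \ref{ss2}.
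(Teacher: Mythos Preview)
Your proposal is correct and matches the paper's approach exactly: the paper gives no proof beyond the sentence preceding the lemma, which says it ``follows easily from the rank formula for $e(\bfi)\scrr^\La_n e(\bfi)$ given in~\cite[Theorem~2.5]{aps}, and does not require that \ref{ss1} and \ref{ss2} are satisfied.'' Your write-up simply unpacks that citation, including the observation that $e(\bfi)\in e(\bfi)\scrr^\La_n e(\bfi)$ forces $e(\bfi)=0$ once the rank is zero.
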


\begin{lem}
\label{x=0}
Let $1\leq m < n$ and suppose that \ref{ss1} and \ref{ss2} hold if $n$ is replaced with $m$.
Then $x_1 = \dots = x_m = 0$.
\end{lem}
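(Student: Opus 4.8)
The plan is to prove this by downward induction on the structure of tableaux, or more precisely by analyzing how the cyclotomic relation $x_1^{\langle\La,\alpha^\vee_{i_1}\rangle}e(\bfi)=0$ interacts with conditions \ref{ss1} and \ref{ss2} applied at level $m$. The key point is that \ref{ss1} (with $n$ replaced by $m$) forces $\langle\La,\alpha^\vee_{i,m}\rangle\leq 1$ for all $i$, and in particular $\langle\La,\alpha^\vee_{i_1}\rangle\leq 1$ whenever $\bfi$ is a residue sequence of a standard tableau of an $l$-multipartition of $m$; so the cyclotomic relation already gives $x_1 e(\bfi)=0$ for every $\bfi\in I^m_\La$. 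Combined with \cref{idemps0}, which kills $e(\bfi)$ for $\bfi\notin I^m_\La$, and the completeness relation $\sum_{\bfi}e(\bfi)=1$, this should immediately yield $x_1=0$ in $\scrr^\La_\beta$ for every $\beta\in\sfq^+_m$, hence in the relevant graded pieces. Strictly, one has to be slightly careful: the statement is about $\scrr^\La_n$, not $\scrr^\La_m$; but since $m<n$, the elements $x_1,\dots,x_m$ of $\scrr^\La_n$ act within each $\scrr^\La_\beta$ for $\beta\in\sfq^+_n$, and one reduces to understanding $x_1e(\bfi)$ for $\bfi\in I^n$. Here we use that any $\bfi\in I^n_\La$ has $\bfi_{\downarrow_m}\in I^m_\La$ (this is exactly the content of the inductive description in \cref{neighbourres}, or just the definition), so $\langle\La,\alpha^\vee_{i_1}\rangle\leq 1$ still holds, and $x_1e(\bfi)=0$; for $\bfi\notin I^n_\La$ we have $e(\bfi)=0$ by \cref{idemps0}. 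Summing over $\bfi\in I^n$ gives $x_1=0$.

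The step from $x_1=0$ to $x_2=\dots=x_m=0$ is where one needs to do a little work, and I expect this to be the main (mild) obstacle. The idea is to use the commutation relations between the $x$'s and the $\psi$'s, specifically $x_{r+1}\psi_r e(\bfi)=(\psi_r x_r+\delta_{i_r,i_{r+1}})e(\bfi)$ together with the fact (from \cref{residue} / \cref{leaveresidue}) that under \ref{ss1}, \ref{ss2} at level $m$ we never have $i_r=i_{r+1}$ for consecutive entries of a sequence in $I^m_\La$ — so the $\delta$ term vanishes on all surviving idempotents. Thus $x_{r+1}e(s_r\bfi)\psi_r = x_{r+1}\psi_r e(\bfi)=\psi_r x_r e(\bfi)$ modulo things that are already zero, and by induction on $r$ (having shown $x_r=0$) one wants to conclude $x_{r+1}e(\bfj)=0$ for all $\bfj$ of the form $s_r\bfi$ with $\bfi\in I^m_\La$. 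The subtlety is that not every $\bfj\in I^m_\La$ of the form we care about arises as $s_r\bfi$ for $\bfi\in I^m_\La$ — indeed \cref{leaveresidue} says $s_r\bfi\notin I^m_\La$ precisely when $i_{r+1}=i_r\pm1$. But in that case $e(s_r\bfi)=0$ by \cref{idemps0}, so there is nothing to prove; and when $i_{r+1}\neq i_r\pm1$ and $i_{r+1}\neq i_r$, the quadratic relation gives $\psi_r^2e(\bfi)=e(\bfi)$, so $\psi_r$ is invertible on $e(\bfi)\scrr^\La_m$ and we can transport $x_{r+1}e(s_r\bfi)=0$ from $x_r e(\bfi)=0$. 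Running this over all $r$ and all residue sequences, and using completeness $\sum_\bfi e(\bfi)=1$ once more, gives $x_{r+1}=0$.

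Alternatively — and this may be cleaner to write — one can avoid the inductive $\psi$-conjugation argument entirely by invoking \cref{gen action on basis}(i): in any Specht module $\spe\la$ with $\la\in\mptn l m$, each $x_r$ acts by $x_r v^\ttt=\sum_{\tts\doms\ttt} a_\tts v^\tts$ with $\bfi^\tts=\bfi^\ttt$, and by \cref{residue} (applied at level $m$) the only tableau with residue sequence $\bfi^\ttt$ is $\ttt$ itself, so the sum is empty and $x_r v^\ttt=0$ for all $r$. Since (by \cref{irred} and the remark following it, applied at level $m$, or by the dimension count) the Specht modules $\spe\la$ for $\la\in\mptn l m$ are exactly the irreducibles and $\scrr^\La_m$ is semisimple, the $x_r$ annihilate every simple module, hence are zero. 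One then lifts this to $\scrr^\La_n$ as in the first paragraph, by noting $x_r e(\bfi)=0$ for all $r\leq m$ and all $\bfi\in I^n$ (splitting into the cases $\bfi\in I^n_\La$, where $\bfi_{\downarrow_m}\in I^m_\La$, and $\bfi\notin I^n_\La$, where $e(\bfi)=0$). I would present the argument via \cref{gen action on basis} as the main line, since it is short and self-contained given the earlier results, and remark that the relation-based argument is also available; the only thing to be careful about is making the reduction from level $n$ to level $m$ explicit, which is where someone could slip up by forgetting that $x_1,\dots,x_m$ still live in $\scrr^\La_n$ and must be shown to vanish there, not merely in the smaller algebra.
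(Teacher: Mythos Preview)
Your relation-based inductive argument has a genuine gap. You correctly handle the case where $i_r$ and $i_{r+1}$ are neither equal nor adjacent: there $\psi_r^2 e(\bfi) = e(\bfi)$, and one can transport $x_r e(\cdot) = 0$ to $x_{r+1} e(\cdot) = 0$. But when $i_r$ and $i_{r+1}$ \emph{are} adjacent and $\bfi \in I^n_\La$, you claim there is ``nothing to prove'' because $e(s_r\bfi) = 0$. This is backwards: it is $s_r\bfi$ that leaves $I^n_\La$, not $\bfi$ itself, and you still owe a proof that $x_{r+1} e(\bfi) = 0$ for these $\bfi$. The paper handles this using the quadratic relation in the opposite direction: since $\psi_r^2 e(\bfi) = \psi_r e(s_r\bfi)\psi_r = 0$, the relation $\psi_r^2 e(\bfi) = (x_r + x_{r+1})e(\bfi)$ in the generic adjacent case gives $x_{r+1} e(\bfi) = -x_r e(\bfi) = 0$ by induction. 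However --- and this is the real type-$C$ wrinkle you are missing --- when $(i_r, i_{r+1}) = (0,1)$ or $(\ell, \ell-1)$ the quadratic relation reads $\psi_r^2 e(\bfi) = (x_r + x_{r+1}^2) e(\bfi)$, yielding only $x_{r+1}^2 e(\bfi) = 0$, not $x_{r+1} e(\bfi) = 0$. The paper resolves this with a substantially new idea: one locates the earlier position $u$ carrying the other residue $1$ (resp.\ $\ell-1$), conjugates $e(\bfi)$ through a string of $\psi$'s to bring the local configuration to $(1,0,1)$ (resp.\ $(\ell-1,\ell,\ell-1)$), and then uses the braid relation at that triple, whose right-hand side involves only products of $\psi$'s killed by \cref{leaveresidue} and an $x$ of strictly smaller index killed by induction. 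This step is the heart of the proof in type $C$ and is entirely absent from your sketch; the obstacle is not ``mild''.

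Your alternative via \cref{gen action on basis} and semisimplicity of $\scrr^\La_m$ has two further problems. First, in the paper's logical order, semisimplicity (\cref{ss}) is proved \emph{using} \cref{x=0} via \cref{Ccellular}, so invoking it here is circular unless you supply an independent argument at level $m$; the dimension count you allude to can be made to work, but you have not carried it out. Second, your ``lifting'' from $x_r = 0$ in $\scrr^\La_m$ to $x_r = 0$ in $\scrr^\La_n$ is not what your first paragraph does: there you treated only $x_1$, where the cyclotomic relation applies directly in $\scrr^\La_n$. For $r > 1$ the lifting requires the natural algebra map $\scrr^\La_m \to \scrr^\La_n$, which exists but is not established in this paper. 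The paper sidesteps all of this by carrying out the entire induction directly inside $\scrr^\La_n$.
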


\begin{proof}
Using the defining relations of $\scrr^\La_n$, we will prove by induction on $r$ that $x_r e(\bfi) = 0$ for all $\bfi \in I^n_\La$ and $1 \leq r\leq m$, from which the result will follow by \cref{idemps0}.

%We prove by induction on $r$ that $x_r = 0$ for $1 \leq r\leq m$.
When $r=1$, the result follows immediately from the cyclotomic relations.
So we will assume that $x_1 = \dots = x_{r-1} = 0$, and show that $x_r e(\bfi) = 0$ whenever $\bfi_{\downarrow_r} \in I^r_\La$.
%So we will assume that $x_1 = \dots = x_{r-1} = 0$.
%By \cref{idemps0}, it suffices to show that $x_r e(\bfi) = 0$ whenever $\bfi_{\downarrow_r} \in I^r_\La$.

If $i_{r-1} = i_r \pm 1$ and neither $i_{r-1}$ nor $i_r$ are $0$ or $\ell$, then by induction we have
\[
x_r e(\bfi) = (x_r + x_{r-1}) e(\bfi) = \psi_{r-1}^2 e(\bfi) = \psi_{r-1} e(s_{r-1}\bfi) \psi_{r-1} = 0,
\]
where the last equality follows from \cref{leaveresidue}.
Similarly, if $(i_{r-1}, i_r) = (1,0)$ or $(\ell-1, \ell)$, then
\[
x_r e(\bfi) = (x_r + x_{r-1}^2) e(\bfi) = \psi_{r-1}^2 e(\bfi) = \psi_{r-1} e(s_{r-1}\bfi) \psi_{r-1} = 0.
\]
If $(i_{r-1}, i_r) = (0,1)$ or $(\ell, \ell-1)$, then by \ref{ss2} and \cref{residue}, $\bfi$ is the residue sequence of some standard tableau $\ttt$ of shape $\la$, and $[\la]$ has exactly one other $1$-node (resp.~$(\ell-1)$-node) besides $\ttt^{-1}(r)$, and $\ttt^{-1}(r-1)$ is the only $0$-node (resp.~$\ell$-node) of $[\la]$.
Moreover, the other $1$-node (resp.~$(\ell-1)$-node) is $\ttt^{-1}(u)$ for some $1\leq u < r$, and $\ttt^{-1}(v) \neq 2$ (resp.~$\ell-2$) for any $u < v < r$.
Thus we have
\begin{align*}
e(\bfi) &= \psi_u^2 e(\bfi) = \psi_u e(s_u\bfi) \psi_u = \psi_u \psi_{u+1} e(s_{u+1} s_u \bfi) \psi_{u+1} \psi_u\\
&= \dots = \psi_u \psi_{u+1} \dots \psi_{r-3} e(s_{r-3}\dots s_u\bfi) \psi_{r-3} \dots \psi_u,
\end{align*}
so that $((s_{r-3}\dots s_u\bfi)_{r-2}, (s_{r-3}\dots s_u\bfi)_{r-1}, (s_{r-3}\dots s_u\bfi)_r) = (1,0,1)$ or $(\ell-1, \ell, \ell-1)$ and
\begin{align*}
x_r e(\bfi) &= \psi_u \psi_{u+1} \dots \psi_{r-3} x_r e(s_{r-3}\dots s_u\bfi) \psi_{r-3} \dots \psi_u\\
&= \psi_u \psi_{u+1} \dots \psi_{r-3} (\psi_{r-1} \psi_{r-2} \psi_{r-1} - \psi_{r-2} \psi_{r-1} \psi_{r-2} - x_{r-2}) e(s_{r-3}\dots s_u\bfi) \psi_{r-3} \dots \psi_u\\
&= 0
\end{align*}
by the induction hypothesis and the fact that $s_{r-2} s_{r-3}\dots s_u\bfi, s_{r-1}s _{r-3}\dots s_u\bfi \notin I^n_\La$ by \cref{leaveresidue}.

Finally, if $i_{r-1} \neq i_r \pm 1$, then since we know that $i_{r-1} \neq i_r$ by \cref{neighbourres},
\[
x_r e(\bfi) = x_r \psi_{r-1}^2 e(\bfi) = x_r \psi_{r-1} e(s_{r-1} \bfi) \psi_{r-1} = 0,
\]
which completes the proof.
\end{proof}

\begin{defn}
Let $(\tts,\ttt) \in \mathrm{Std}^2(\mptn l n)$.
Then we define the element $e_{\tts\ttt}\in \scrr^\La_n$ to be $e_{\tts\ttt} = \psi_{(w^\tts)^{-1}} e(\bfi^\la)\psi_{w^\ttt}$.
\end{defn}

By \cref{residue}, the elements $e_{\tts\ttt}$ do not depend on the choice of reduced expression.

\begin{thm}
\label{Ccellular}
Suppose that conditions \ref{ss1} and \ref{ss2} hold.
Then $\scrr^\La_n$ is a graded cellular algebra with graded cellular basis
\[
\scrb = \{e_{\tts\ttt} \mid (\tts,\ttt) \in \mathrm{Std}^2(\mptn l n)\}
\]
with $\deg e_{\tts\ttt} = 0$ for all $\tts,\ttt$.
\end{thm}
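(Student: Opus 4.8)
The plan is to prove that, under \ref{ss1} and \ref{ss2}, after extending scalars to a field $\bbf$ (for instance $\operatorname{Frac}(\calo)$) the algebra $\scrr^\La_n$ becomes isomorphic to the split semisimple algebra $A := \bigoplus_{\la \in \mptn l n} \operatorname{End}_\bbf(\rspe\la_\kappa) \cong \bigoplus_\la M_{d_\la}(\bbf)$, where $d_\la := \#\std\la$, and that this isomorphism carries $\scrb$ to the matrix-unit basis of $A$. Cellularity then follows formally, and the statement over $\calo$ is obtained by combining this with the purely combinatorial fact that $\scrb$ spans $\scrr^\La_n$ over $\calo$. The first input I would record is that $x_1 = \dots = x_n = 0$ in $\scrr^\La_n$: conditions \ref{ss1} and \ref{ss2} at level $n$ imply them at every level $m \leq n$ (each $\alpha^\vee_{i,m}$ is a partial sum of $\alpha^\vee_{i,n}$, coroots pair nonnegatively with $\La$, and $\tfrac{m-1}{2} \leq \tfrac{n-1}{2}$), so \cref{x=0} gives $x_1 = \dots = x_{n-1} = 0$, and the very same induction --- now legitimately invoking \cref{leaveresidue} at level $n$ --- also gives $x_n = 0$. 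With \cref{idemps0} and the basis theorem for $\scrr_n$ \cite{kl09,rouq}, it follows that $\scrr^\La_n$ is spanned over $\calo$ by $\{\psi_w e(\bfi) \mid w \in \sss,\ \bfi \in I^n_\La\}$.

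Next I would feed in the modules of \cref{irred}. Let $\rho\colon \scrr^\La_n \otimes_\calo \bbf \to \operatorname{End}_\bbf(M)$ be the representation on $M := \bigoplus_\la \rspe\la_\kappa$. By \cref{residue} the residue-sequence supports $\{\bfi^\ttt \mid \ttt \in \std\la\}$ of distinct $\la$ are disjoint, so each $e(\bfi)$ acts nonzero on at most one summand, $\rho$ lands in $A$, and $\dim_\bbf A = \sum_\la d_\la^2 = \#\scrb$. The $\rspe\la_\kappa$ are simple (\cref{irred}), pairwise non-isomorphic, and satisfy $\operatorname{End}_{\scrr^\La_n}(\rspe\la_\kappa) = \bbf$ --- an endomorphism commutes with each $e(\bfi^\ttt)$, hence is diagonal in the basis $\{v^\ttt\}$, with all diagonal entries equal since $\psi_{w^\ttt}$ sends $v^{\ttt^\la}$ to $v^\ttt$ --- so Jacobson's density theorem makes $\rho$ surjective onto $A$, and in particular $\dim_\bbf(\scrr^\La_n \otimes \bbf) \geq \#\scrb$. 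Moreover a direct computation with the explicit action in \cref{irred}, using that $e(\bfi^\la)$ acts as the rank-one projection of $M$ onto $\bbf v^{\ttt^\la}$ (again \cref{residue}) and choosing reduced expressions compatibly (permissible since $e_{\tts\ttt}$ does not depend on these choices), shows that $e_{\tts\ttt}$ kills $\rspe\mu_\kappa$ for $\mu \neq \la$ and acts on $\rspe\la_\kappa$ as the matrix unit $v^\ttt \mapsto v^\tts$; thus $\rho(\scrb)$ is precisely the matrix-unit basis of $A$.

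The remaining point --- and the main obstacle --- is that $\scrb$ spans $\scrr^\La_n$ over $\calo$, equivalently that each $e(\bfj)\scrr^\La_n e(\bfi)$ is at most one-dimensional (which also follows directly from the rank formula \cite[Theorem~2.5]{aps} under \ref{ss1}, \ref{ss2}). For this one reduces each spanning element $\psi_w e(\bfi^\ttt)$ to a scalar multiple of some $e_{\tts\ttt}$, or to $0$: by \cref{idemps0} it is $0$ unless $w\bfi^\ttt = \bfi^\tts$ for some $\tts \in \std{\mptn l n}$, and one then rewrites $\psi_w e(\bfi^\ttt)$ using the defining relations, the braid- and quadratic-relation corrections all vanishing exactly as in the proof of \cref{x=0} (under \ref{ss1}, \ref{ss2} the pattern $i, i\pm1, i$ occurs only as $(1,0,1)$ or $(\ell-1,\ell,\ell-1)$, where the correction $x_r + x_{r+2}$ is $0$, and $s_r\bfi \notin I^n_\La$ whenever $i_r$ and $i_{r+1}$ are adjacent). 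Carrying out this rewriting --- the genuinely technical part, essentially a bookkeeping exercise in the relations but the heart of the matter --- is what I expect to cost the most effort; everything else is formal.

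Granting the spanning, $\scrb$ has $\#\scrb = \dim_\bbf A$ elements, spans $\scrr^\La_n \otimes \bbf$, and maps under $\rho$ onto a basis of $A$, so it is a basis of $\scrr^\La_n \otimes \bbf$ and $\rho$ is an isomorphism; since $\scrr^\La_n$ is a free $\calo$-module of rank $\#\scrb$ (see \cite[Theorem~2.5]{aps}), the spanning set $\scrb$ of matching size is an $\calo$-basis. Finally, a direct sum of matrix algebras is a split cellular algebra with cell datum given by $\std\la$ and any ordering of $\mptn l n$ (say $\dom$, though the order is immaterial here), cellular basis the matrix units, and anti-involution the transpose; transporting this structure along $\rho^{-1}$ shows $\scrb$ is a graded cellular basis of $\scrr^\La_n$, with anti-involution the $\calo$-linear anti-automorphism of $\scrr^\La_n$ fixing $e(\bfi), x_r, \psi_r$ (which sends $e_{\tts\ttt}$ to $e_{\ttt\tts}$ by reduced-expression independence), with every $e_{\tts\ttt}$ in degree $0$ (because $\rho$ is a homomorphism of graded algebras into an algebra concentrated in degree $0$, so $\scrr^\La_n$ itself is concentrated in degree $0$), and with the cellular multiplication rule holding with no higher-cell error terms and structure constants $r_a(\cdot,\cdot)$ independent of $\ttt$, as in any matrix algebra; all of this descends from $\bbf$ to $\calo$ since $\scrb$ is an $\calo$-basis.
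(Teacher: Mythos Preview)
Your proof is correct, but the paper takes a shorter, more internal route. The key observation you almost make but do not exploit directly is that under \ref{ss1} and \ref{ss2}, once all $x_r = 0$, the braid relations for the $\psi$-generators hold \emph{exactly} in $\scrr^\La_n$ (the only possible error terms are $x_r + x_{r+2}$, which vanish). Hence $\psi_w$ is independent of reduced expression, and the relation $e_{\tts\ttt} e_{\ttu\ttv} = \delta_{\ttt,\ttu} e_{\tts\ttv}$ can be verified \emph{inside the algebra} from idempotent orthogonality and \cref{residue}, giving the matrix decomposition $\scrr^\La_n \cong \bigoplus_\la \operatorname{Mat}_{d_\la}(\calo)$ over $\calo$ without ever passing to a field, invoking the modules of \cref{irred}, or appealing to Jacobson density. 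Spanning and linear independence of $\scrb$ then both fall out at once (the former from the vanishing of $x$'s and \cref{idemps0}, the latter from the rank formula), so what you flag as ``the main obstacle'' is in fact no harder than the rest.

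Your approach has the virtue of making explicit the connection between $\scrb$ and the matrix-unit basis via the representation $\rho$, and of showing that the cell modules really are the $\rspe\la_\kappa$; the paper's approach buys a cleaner argument that works uniformly over $\calo$ and avoids the extend-then-descend manoeuvre.
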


\begin{proof}
By \cref{neighbourres}, if $\bfi \in I^n_\La$, $\bfi$ cannot contain a subsequence of the form $(i, i\pm1, i)$ for any $i\in I$, except possibly $(1,0,1)$ or $(\ell-1, \ell, \ell-1)$.
\cref{idemps0,x=0} imply that (even in the degenerate cases above) the $\psi$ generators satisfy the braid relations for $\sss$.
Therefore $\scrr^\La_n$ is spanned by the elements $\{\psi_v e(\bfi) \psi_w \mid v,w \in \sss, \; \bfi \in I^n_\La\}$.
Since $\psi_v e(\bfi) \psi_w = e(v \bfi) \psi_v e(\bfi) \psi_w = 0$ if $v\bfi \notin I^n_\La$, $\scrr^\La_n$ is in fact spanned by the elements  of $\scrb$.
It follows from the rank formula~\cite[Theorem~2.5]{aps} that $\scrb$ is a basis for $\scrr^\La_n$.

The orthogonality relations on the idempotents $e(\bfi)$ imply that $e_{\tts\ttt} e_{\ttu\ttv} = \delta_{\ttt,\ttu} e_{\tts\ttv}$, so that $\scrb$ is in fact a basis of matrix units, and
\[
\scrr^\La_n = \bigoplus_{\la \in \mptn l n} \operatorname{Mat}_{\dim \rspe\la_\kappa}(\calo).
\]
It follows that this basis is a cellular basis.
As in the proof of \cref{irred}, we have that $\deg \psi_r e(\bfi) = 0$ for all $1\leq r < n$ and $\bfi\in I^n_\La$, so all elements of $\scrb$ are homogeneous of degree $0$.
\end{proof}

In the proof of the above theorem, we showed that if conditions \ref{ss1} and \ref{ss2} hold, $\scrr^\La_n$ is a direct sum of matrix algebras.
We obtain the main result of this subsection as a corollary of this fact.

\begin{thm}\label{ss}
Suppose that $\calo = \bbf$ is a field and that conditions \ref{ss1} and \ref{ss2} hold.
Then $\scrr^\La_n$ is semisimple.
\end{thm}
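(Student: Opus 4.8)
The plan is to obtain this immediately as a corollary of \cref{Ccellular}. In the course of proving that theorem we established something stronger than cellularity: under conditions \ref{ss1} and \ref{ss2}, the cellular basis $\scrb$ is in fact a basis of matrix units, giving an isomorphism of $\calo$-algebras
\[
\scrr^\La_n \;\cong\; \bigoplus_{\la \in \mptn l n} \operatorname{Mat}_{d_\la}(\calo), \qquad d_\la = \dim \rspe\la_\kappa = |\std\la|.
\]
So the first step is simply to specialise this decomposition to $\calo = \bbf$.

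Next I would observe that each summand $\operatorname{Mat}_{d_\la}(\bbf)$ is a full matrix algebra over the field $\bbf$, hence a simple, and in particular semisimple, $\bbf$-algebra. Since a finite direct sum of semisimple algebras is semisimple, it follows that $\scrr^\La_n$ is semisimple, which is the desired conclusion.

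There is essentially no obstacle here: all of the work is already contained in \cref{Ccellular}, and the only extra ingredient is the elementary fact that $\operatorname{Mat}_d(\bbf)$ is simple. If a more representation-theoretic phrasing were preferred, one could instead note that \cref{irred} provides an irreducible module $\rspe\la_\kappa$ for each $\la \in \mptn l n$, that \cref{residue} shows these modules are pairwise non-isomorphic (their supports on the idempotents $e(\bfi)$ are disjoint), and that $\sum_\la (\dim \rspe\la_\kappa)^2 = \sum_\la |\std\la|^2 = \dim \scrr^\La_n$ by the rank formula \cite[Theorem~2.5]{aps}; equality in the Wedderburn bound then forces semisimplicity. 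Either route is immediate given the preceding results.
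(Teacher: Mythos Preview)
Your proposal is correct and matches the paper's own argument exactly: the paper also derives \cref{ss} directly as a corollary of the matrix-algebra decomposition established in the proof of \cref{Ccellular}. Your additional remark about the alternative Wedderburn-bound phrasing is fine but not needed.
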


\subsection{The non-semisimple case}

In this section, we will assume throughout that $\calo = \bbf$ is a field and prove the following converse to \cref{ss}.

\begin{thm}
Suppose that $\calo = \bbf$ is a field, and that at least one of the conditions \ref{ss1} and \ref{ss2} fails.
Then $\scrr^\La_n$ is not semisimple.
\end{thm}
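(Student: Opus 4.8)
We argue the contrapositive: supposing that \ref{ss1} or \ref{ss2} fails, we exhibit one $\scrr^\La_n$-module that is not semisimple --- indeed, one with a one-dimensional submodule admitting no complement --- which suffices, since every module over a semisimple algebra is semisimple. The guiding observation is that \cref{irred} produced the \emph{simple} modules $\rspe\la_\kappa$ precisely because, under \ref{ss1} and \ref{ss2}, every correction term $\pm\delta_{i_r,i_{r+1}}$ in the relations $x_r\psi_re(\bfi)=(\psi_rx_{r+1}-\delta_{i_r,i_{r+1}})e(\bfi)$ and $x_{r+1}\psi_re(\bfi)=(\psi_rx_r+\delta_{i_r,i_{r+1}})e(\bfi)$, and every inhomogeneous term in the quadratic relations, vanished on the relevant vectors; this is essentially the content of \cref{leaveresidue} together with the identity $x_r=0$ in $\scrr^\La_n$ (\cref{x=0}), both of which require \ref{ss1} and \ref{ss2}. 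When a hypothesis fails, one such correction survives, and we exploit it. We distinguish two situations.

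First, the ``generic'' failures, where the witness is a Specht module. If \ref{ss1} fails there is $i\in I$ with $\langle\La,\alpha^\vee_{i,n}\rangle\geq2$, and the generic subcase is that two of the charge residues $\overline{\kappa_1},\dots,\overline{\kappa_l}$ are distinct but lie within a window of $n$ consecutive residues; if \ref{ss2} fails, then, applying when $\ell<\infty$ the symmetry $i\mapsto\ell-i$ of the diagram (which induces an isomorphism $\scrr^\La_n\cong\scrr^{\La'}_n$, the defining relations being invariant under it), we may assume $\overline{\kappa_j}<\tfrac{n-1}2$ for some $j$. In each such case we will produce $\la\in\mptn l n$ and a standard tableau $\tts\in\std\la$ with $\tts\neq\ttt^\la$, $\bfi^\tts\neq\bfi^{\ttt^\la}$, and $v^\tts\neq0$ in $\spe\la_\kappa$, such that $x_uv^\tts=0$ and $\psi_uv^\tts=0$ for every $u$. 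One chooses $\la$ with enough column structure that the Garnir relations $\mathsf g^Az^\la=0$ (using $\mathsf g^A=\psi_{w^{\ttg^A}}$, cf.\ \cref{section: specht}) kill the ``raising'' terms that \cref{gen action on basis} would otherwise contribute to $\psi_uv^\tts$, while two of its diagonals collide in residue --- which, by \cref{leaveresidue}, can happen precisely because the hypotheses fail. Then $L:=\bbf v^\tts$ is a one-dimensional submodule of $\spe\la_\kappa$, and it has no complement: writing $v^\tts=Xz^\la$ with $X\in\scrr^\La_n$, the element $X$ must involve a $\psi$-generator (as $\bfi^\tts\neq\bfi^{\ttt^\la}$), so $Xv^\tts=0$; if $\spe\la_\kappa=L\oplus W$ then $z^\la=\alpha v^\tts+w$ with $w\in W$ and $\alpha\neq0$ (otherwise $z^\la\in W$, contradicting that $z^\la$ generates $\spe\la_\kappa$), whence $v^\tts=Xz^\la=\alpha Xv^\tts+Xw=Xw\in W$, a contradiction. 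Thus $\spe\la_\kappa$, and hence $\scrr^\La_n$, is not semisimple.

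Secondly, the ``boundary'' failures --- all failures at level $l=1$, the case $\langle\La,\alpha^\vee_i\rangle\geq2$ for some $i$, and those mild failures of \ref{ss2} for which every candidate Specht module turns out irreducible (for example $\scrr^{\La_0}_3$ in type $C^{(1)}_\ell$ with $\ell$ large) --- where no Specht module does the job. Here we pick a residue sequence $\bfi\in I^n_\La$ with $i_1$ a charge residue such that, in the summand $\scrr^\La_\beta$ of $\scrr^\La_n$ containing $e(\bfi)$, the idempotent $e(\bfi)$ is the only surviving one (so $e(\bfi)=1$ and all $\psi$-generators vanish in $\scrr^\La_\beta$, which is therefore commutative), while by the rank formula \cite[Theorem~2.5]{aps} some $x_re(\bfi)$ is nonzero --- either because $\langle\La,\alpha^\vee_{i_1}\rangle\geq2$, or because $\bfi$ reflects past $0$ (resp.\ $\ell$). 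Then $\dim_\bbf\scrr^\La_\beta\geq2$ and the $x_r$ act nilpotently, so $\scrr^\La_\beta\cong\bbf[t]/(t^k)$ for some $k\geq2$, which is not semisimple; hence neither is $\scrr^\La_n$. (Equivalently, in these cases two distinct one-dimensional Specht modules are isomorphic to the same simple, which by \cref{spechtbasis} and the rank formula is impossible for a semisimple block.)

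The main difficulty is the bookkeeping of the case analysis --- the two ends $0$ and $\ell$, the cases $\ell<\infty$ and $\ell=\infty$, equal versus merely nearby charge residues, and small values of $n$ --- so that in each configuration either a suitable Specht module or a summand of the required shape is available; and, within the Specht-module cases, the verification that $v^\tts\neq0$ and that every $x_u$ and $\psi_u$ kills it. The first point is genuinely subtle in type $C^{(1)}_\ell$, where the standard-tableau basis of $\spe\la_\kappa$ is only conjectural \cite[Conjecture~5.3]{aps}: one must either verify the relevant dimension by hand for the (small) shapes that occur, or argue using nothing beyond the spanning set of \cref{spechtbasis} and \cref{gen action on basis}. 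The second reduces to careful tracking of the $\pm\delta$ corrections and the inhomogeneous quadratic relations near the residues $0$ and $\ell$, where --- as noted after \ref{ss2} and in \cref{neighbourres} --- the otherwise-forbidden subsequences $(1,0,1)$ and $(\ell-1,\ell,\ell-1)$ can occur in $I^n_\La$ and require separate treatment.
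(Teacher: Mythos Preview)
Your outline is broadly the paper's strategy --- exhibit a module with a one-dimensional submodule that is not a summand, using Specht modules where possible and an explicit two-dimensional construction otherwise --- but as written it is a sketch rather than a proof, and the case analysis is not organised correctly.

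First, your partition into ``generic'' and ``boundary'' does not match the actual shape of the argument.  The paper's four cases are: (a) some $\overline{\kappa_j}\in\{0,\ell\}$; (b) \ref{ss2} fails but all $\overline{\kappa_j}\notin\{0,\ell\}$; (c) \ref{ss2} holds but some $\overline{\kappa_j}=\overline{\kappa_{j'}}$; (d) \ref{ss2} holds, the $\overline{\kappa_j}$ are distinct, but \ref{ss1} fails.  Cases (b) and (d) use Specht modules (for an explicit hook or two-column shape, with $\tts$ the least dominant standard tableau); cases (a) and (c) use a hand-built two-dimensional uniserial module.  Your claim that ``all failures at level $l=1$'' fall on the non-Specht side is false: case~(b) applies perfectly well at level~$1$ whenever $\overline{\kappa_1}\notin\{0,\ell\}$, and the hook Specht module $\spe{(n-2\overline{\kappa_1},1^{2\overline{\kappa_1}})}$ does the job.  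The genuinely non-Specht cases are exactly (a) and (c).

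Second, within the Specht cases you never actually \emph{specify} $\la$ or $\tts$, nor verify $v^\tts\neq 0$ or the key vanishing $\psi_u v^\tts=0$.  The paper takes $\tts$ to be the least dominant standard $\la$-tableau and appeals to \cref{gen action on basis}, checking the one surviving $\psi_k$ directly via the quadratic relation; that step is short but essential, and ``one chooses $\la$ with enough column structure'' is not a substitute.  Your no-complement argument via $Xv^\tts=0$ is fine (and slightly different from the paper's, which instead observes that $\bfi^\tts\neq\bfi^{\ttt^\la}$ forces $\operatorname{Hom}(\spe\la,\spe\mu)=0$).

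Third, your ``boundary'' argument --- find $\beta$ with a single surviving idempotent, hence $\scrr^\La_\beta$ commutative and local of dimension $\geq 2$ --- is a nice reformulation of what the paper does in cases (a) and (c), where the hand-built two-dimensional module is precisely the regular representation of such a block.  But the assertion $\scrr^\La_\beta\cong\bbf[t]/(t^k)$ is not justified (a commutative local algebra with nilpotent maximal ideal need not be uniserial); what you actually need, and what suffices, is just $\dim_\bbf\scrr^\La_\beta\geq 2$.  More importantly, you must still \emph{check} that $e(\bfi)$ is the unique surviving idempotent in its block and that the rank is $\geq 2$, and for case~(a) with $n>\ell$ (where $\bfi^\la$ has repeated entries and several $s_r\bfi$ may lie in $I^n_\La$) this verification is not immediate --- it is essentially the computation the paper carries out by writing down the module $M$ and checking the relations by hand.
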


We break the proof into several lemmas.
First we will look at the case where \ref{ss2} fails.
We begin with separate treatment of the case where $\overline{\kappa_j} = 0$ or $\ell$ for some $1\leq j\leq l$.

\begin{lem}\label{Lambda0notss}
Suppose $\overline{\kappa_j} = 0$ or $\ell$ for some $1\leq j\leq l$.
If $n > 1$,
then $\scrr^\La_n$ is not semisimple.
\end{lem}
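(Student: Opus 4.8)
The plan is to exhibit an explicit $\scrr^\La_n$-module with a one-dimensional submodule that has no complement, which forces non-semisimplicity. Since $\overline{\kappa_j} = 0$ or $\ell$ for some $j$, the component $\la^{(j)}$ can carry a long string of consecutive residues starting at $0$ (or at $\ell$), and the relevant degenerate quadratic relation $\psi_r^2 e(\bfi) = (x_r + x_{r+1}^2)e(\bfi)$ or $(x_r^2 + x_{r+1})e(\bfi)$ is in play. First I would reduce to the case $n = 2$: if $\scrr^\La_2$ is non-semisimple, then $\scrr^\La_n$ is non-semisimple for all $n > 1$, because $\scrr^\La_2$ is (Morita equivalent to, or a cellular quotient of) a subquotient obtained by restricting to the block $\beta = \alpha_0 + \alpha_1$ (resp. $\alpha_\ell + \alpha_{\ell-1}$) — more carefully, one uses that a non-semisimple block of $\scrr^\La_m$ survives inside $\scrr^\La_n$ via the natural sequence of embeddings coming from the Specht module theory of \cite{aps}, or simply notes that $e(\bfi) \scrr^\La_n e(\bfi)$ for a suitable $\bfi$ extending the bad length-$2$ sequence is non-semisimple.

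For the base case, take $\kappa_j$ with $\overline{\kappa_j} = 0$ and consider $\beta = \alpha_0 + \alpha_1 \in \sfq^+_2$, with $\bfi = (0,1) \in I^\beta$. The relevant cell/Specht modules are $\spe{\la}$ for $\la$ the two-box multipartitions built on component $j$: the "row" partition $(2)$ in position $j$ and the "column" partition $(1,1)$ in position $j$, with residue sequences $(0,1)$ and $(1,0)$ respectively (using $\res(r,c,t) = \overline{\kappa_t + c - r}$, so the $(1,1)$-node has residue $0$, the $(1,2)$-node residue $1$, the $(2,1)$-node residue $\overline{-1} = 1$). I would then compute directly in $e(\bfi)\scrr^\La_\beta e(\bfi)$ for $\bfi = (0,1)$: the cyclotomic relation gives $x_1 e(\bfi) = 0$ (since $\langle \La, \alpha^\vee_0\rangle \geq 1$), so $x_1 = 0$ on $e(\bfi)$, while $x_2 e(\bfi)$ need not vanish; using $\psi_1^2 e(\bfi) = (x_1 + x_2^2)e(\bfi) = x_2^2 e(\bfi)$ together with $\psi_1 x_1 e(\bfi)$-type relations, one sees $e(\bfi)\scrr^\La_\beta e(\bfi)$ has a basis $\{e(\bfi), x_2 e(\bfi)\}$ with $x_2^2 e(\bfi) = 0$, i.e. it is isomorphic to $\bbf[x]/(x^2)$, which is non-semisimple. (When $\overline{\kappa_j} = \ell$ the symmetric computation with $\bfi = (\ell, \ell-1)$ and $\psi_1^2 e(\bfi) = (x_1^2 + x_2)e(\bfi)$ applies.)

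Concretely, as a module I would take $M = \spe{((2),\dots)}_\kappa$ (the Specht module for the multipartition with a single row of length $2$ in the $j$-th component and empty elsewhere, which by \cref{spechtbasis} is spanned by $v^{\ttt^\la}$ alone, hence one-dimensional) sitting inside a two-dimensional module $P = e(\bfi)\scrr^\La_\beta$ viewed as a left module, or directly inside the regular representation $\scrr^\La_\beta e(\bfi)$: the element $z = x_2 e(\bfi)$ (equivalently a suitable $v$-element) spans a one-dimensional submodule on which all $x_r$ and $\psi_r$ act as zero, and the quotient is again one-dimensional; if this extension split, $x_2 e(\bfi)$ would be a scalar multiple of an idempotent, contradicting $x_2^2 e(\bfi) = 0 \neq x_2 e(\bfi)$ (the latter nonvanishing coming from the rank formula \cite[Theorem~2.5]{aps}, which pins down $\dim e(\bfi)\scrr^\La_\beta e(\bfi) = 2$).

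**The main obstacle** I anticipate is the reduction from $n = 2$ to general $n$: one must make sure that the bad length-$2$ configuration genuinely embeds as a (non-semisimple) corner inside $\scrr^\La_n$ — i.e. that the extra $n - 2$ strands can be placed so that the relevant idempotent $e(\bfi)$ with $\bfi = (i_3, \dots, i_n, 0, 1)$ is nonzero and the sub-computation localizes correctly. This should follow from the rank formula of \cite[Theorem~2.5]{aps} (choosing $i_3, \dots, i_n$ to extend to a valid residue sequence, which is possible since $\overline{\kappa_j} = 0$ or $\ell$ leaves room to grow a hook or to repeat residues legally), but verifying $e(\bfi) \neq 0$ and that $x_2 e(\bfi)$ remains nonzero requires a careful application of that formula rather than a hand computation. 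The quadratic-relation bookkeeping for the $(0,1)$ versus $(1,0)$ and $(\ell,\ell-1)$ versus $(\ell-1,\ell)$ cases is routine once the $n = 2$ case is nailed down.
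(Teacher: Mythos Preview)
Your approach differs substantially from the paper's, and has a genuine gap. The paper never reduces to $n=2$: it directly builds, for each $n>1$, a two-dimensional module $M$ with basis $\{u,v\}$ supported on the single residue sequence $\bfi^\la$ for $\la^{(j)}=(n)$, specifies the action of every generator explicitly (all $\psi_r$ act by zero, and the $x_r$ act on $v$ by $\pm u$ according to a sign pattern dictated by the residue sequence), and then checks the quadratic and braid relations by hand. The key point is that along the row $(n)$ the residues run $0,1,2,\dots$ (or $\ell,\ell-1,\dots$), so the only nontrivial braid relations are the degenerate $(1,0,1)$ or $(\ell-1,\ell,\ell-1)$ ones, and the sign pattern is cooked up precisely so that $x_r v + x_{r+2}v = 0$ there and $x_r v + x_{r+1}v = 0$ at the ordinary quadratic relations.

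The gap in your argument is the reduction step. There is no general statement that non-semisimplicity of $\scrr^\La_2$ forces non-semisimplicity of $\scrr^\La_n$: the cyclotomic quotients for different $n$ are not related by naive subalgebra or quotient maps, and your appeals to ``embeddings from Specht module theory'' or a corner $e(\bfi)\scrr^\La_n e(\bfi)$ are not arguments. To make the corner approach work you would have to \emph{prove} that some $e(\bfi)\scrr^\La_n e(\bfi)$ is non-semisimple, and that computation is essentially the same work as constructing the module $M$ directly for that $n$; you do not save anything.

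Your $n=2$ analysis also needs repair. The cyclotomic relation gives $x_1^{\langle\La,\alpha^\vee_0\rangle}e(\bfi)=0$, not $x_1 e(\bfi)=0$; if two components satisfy $\overline{\kappa_j}=0$ you only get $x_1^2 e(\bfi)=0$. Likewise your implicit use of $e(1,0)=0$ (to control $\psi_1^2$) fails whenever some $\overline{\kappa_{j'}}=1$, and then $\dim e(\bfi)\scrr^\La_\beta e(\bfi)$ need not equal $2$. These are fixable, but they show that even the base case is not as clean as you suggest in higher level, and the paper's uniform construction sidesteps all of this.
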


\begin{proof}
For any $n>1$ we construct an explicit two-dimensional uniserial $\scrr^\La_n$-module.
Let $\la\in \mptn l n$ be the multipartition such that every component is empty except for component $j$, with $\la^{(j)} = (n)$, and let $\bfi = \bfi^\la$.

Define $M$ to be the $\scrr^\La_n$-module with generators $u,v$ subject to the following relations.

\begin{align*}
e(\bfi) u &= u,\\
e(\bfi) v &= v,\\
\psi_r u &= \psi_r v = 0 \text{ for all } r,\\
x_r u &= 0 \text{ for all } r,\\
x_r v &= 0 \text{ if } r\equiv1 \mod \ell,\\
x_{2k\ell +r} v &= (-1)^r u \text{ for all } k \text{ and all } 2\leq r\leq \ell,\\
x_{2k\ell +r} v &= (-1)^{r+1} u \text{ for all } k \text{ and all } \ell+2 \leq r\leq 2\ell.
\end{align*}

Then $M$ is a two-dimensional vector space over $\bbf$, and we must show it is an $\scrr^\La_n$-module, whence the result follows since $u$ generates a proper submodule of $M$, while $v$ generates the whole of $M$.
So we must check that the defining relations of $\scrr^\La_n$ hold when acting on $M$.
For most of the relations, the result is trivial -- if $\psi$ generators appear in every term or for the idempotent relations, or the product of two $x$ generators.
By definition of $\bfi$, there are no error terms in the relations pushing $x$ generators past $\psi$ generators, so these are also trivial.
This leaves the quadratic and braid relations.

First, we deal with the quadratic relations.
If $r \equiv 1 \mod \ell$, then $(i_r, i_{r+1}) = (0,1)$ or $(\ell, \ell-1)$, so that $\psi_r^2 e(\bfi) = (x_r + x_{r+1}^2) e(\bfi)$.
In both cases, $\psi_r$, $x_r$ and $x_{r+1}^2$ each kill both $u$ and $v$, so the relation holds.
If $(i_r, i_{r+1}) = (1,0)$ or $(\ell-1, \ell)$, then $\psi_r^2 e(\bfi) = (x_r^2 + x_{r+1}) e(\bfi)$, and again each of $\psi_r$, $x_r^2$ and $x_{r+1}$ kills both $u$ and $v$, so the relation holds.
Finally, suppose that $i_r, i_{r+1} \neq 0$ or $\ell$.
Then $\psi_r^2 e(\bfi) = (x_r + x_{r+1}) e(\bfi)$, with the left-hand side killing $u$ and $v$, $x_r$ and $x_{r+1}$ each killing $u$, and $x_r v = - x_{r+1} v$, so that this relation always holds.

Next, we check the braid relations.
Since $\psi_r u = \psi_r v = 0$ for all $r$, we only have to worry about the braid relations which yield error terms.
With our chosen $\bfi$, this only happens for the relations $(\psi_{r+1} \psi_r \psi_{r+1} - \psi_r \psi_{r+1} \psi_r) e(\bfi) = (x_r + x_{r+2}) e(\bfi)$ for $r \equiv 0 \mod \ell$.
Now we have that $x_r v = - x_{r+2}v$ by the final two defining relations for $M$.
\end{proof}

Next, we will handle the case where \ref{ss2} fails and $\overline{\kappa_j} \neq 0, \ell$ for any $1\leq j\leq l$.
Recall that we have fixed a multicharge $\kappa = (\kappa_1, \dots, \kappa_l)\in \bbz^l$ such that $\La = \La_\kappa$.
Define $\overline{\kappa} = (\overline{\kappa_1}, \dots, \overline{\kappa_l})\in I^l$ and $\hat{\kappa} = (2\ell - \overline{\kappa_1}, \dots, 2\ell - \overline{\kappa_l})\in I^l$.

\begin{lem}\label{ss2fail}
Suppose that $\overline{\kappa_j}\neq 0,\ell$ for all $1\leq j \leq l$, and \ref{ss2} fails.
Then $\scrr^\La_n$ is not semisimple.
\end{lem}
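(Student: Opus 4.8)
The plan is to exhibit, using the failure of \ref{ss2}, a multipartition $\la$ whose Specht module $\spe\la_\kappa$ is not semisimple; non-semisimplicity of $\scrr^\La_n$ then follows immediately.

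\emph{Reduction and choice of $\la$.} Since \ref{ss2} fails there is some $j$ with $\overline{\kappa_j}<\tfrac{n-1}2$ or $\overline{\kappa_j}>\ell-\tfrac{n-1}2$. When $\ell<\infty$, the automorphism $i\mapsto\ell-i$ of the Dynkin diagram of $C^{(1)}_\ell$ induces an isomorphism $\scrr^{\La_\kappa}_n\cong\scrr^{\La_{\ell-\kappa}}_n$ that swaps these two alternatives, and when $\ell=\infty$ only the first can occur; so I may assume that $a:=\overline{\kappa_j}$ satisfies $1\le a$ and $n\ge 2a+2$. I then take $\la\in\mptn l n$ supported in component $j$, with first column of length at least $2a+1$; concretely, $\la^{(j)}=(2,1^{n-2})$ works. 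Reading residues down that column gives $a,a-1,\dots,1,0,1,\dots$, so $[\la]$ contains two $a$-nodes, $(1,1,j)$ and $(2a+1,1,j)$. This is exactly the configuration that \ref{ss1}–\ref{ss2} forbid (cf.\ the remark following them), and it is the reason \cref{residue,leaveresidue} fail for $\la$.

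\emph{A one-dimensional submodule with no complement.} In $\spe\la_\kappa$, with the spanning set $\{v^\ttt\}$ from \cref{spechtbasis}, I choose two standard $\la$-tableaux $\ttt$ and $\tts=s_r\ttt$, each the unique standard $\la$-tableau of its residue sequence and as low as possible in the dominance order, with $(\bfi^\ttt)_{r+1}=(\bfi^\ttt)_r\pm1$ while $\tts$ is still standard — this last point being possible precisely because of the two $a$-nodes, and excluded by \cref{leaveresidue} whenever \ref{ss1} and \ref{ss2} hold. (For $\la^{(j)}=(2,1^{n-2})$ with $n=2a+2$ these are the two least dominant standard $\la$-tableaux, namely those with $n-1$, resp.\ $n$, in the node $(1,2,j)$; for larger $n$ one argues similarly, being careful about which such tableaux share a residue sequence.) I claim $N:=\bbf v^\tts$ is an $\scrr^\La_n$-submodule. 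As $\tts$ is the unique standard $\la$-tableau of its residue sequence, \cref{gen action on basis}(i) and the grading force $x_pv^\tts=0$ for all $p$; and by \cref{gen action on basis}(ii) — with no dominance restriction, as $\tts$ is minimal — $\psi_pv^\tts=0$ for $p\ne r$, since no standard $\la$-tableau realises $s_p\bfi^\tts$, while $\psi_rv^\tts=\psi_r^2v^\ttt$ equals the right-hand side of the applicable quadratic relation applied to $v^\ttt$, each of whose terms is an $x^kv^\ttt$ and hence vanishes (by uniqueness of the residue sequence of $\ttt$ and the grading). Since $\bfi^\tts\ne\bfi^\ttt$, none of the $\psi_pv^\tts$ can be a nonzero scalar multiple of $v^\tts$, so these vanishings do establish that $N$ is a submodule.

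Now suppose $\spe\la_\kappa=N\oplus C$. Then $C\cong\spe\la_\kappa/N$ is generated by the image of $z^\la$, so it contains a lift $\tilde v=v^\ttt+\mu v^\tts$ of the image of $v^\ttt$. Choosing the preferred reduced words along the relevant chain of tableaux compatibly, one has $\psi_rv^\ttt=v^\tts$ (indeed $s_rw^\ttt=w^\tts$ is reduced and one longer than $w^\ttt$), while $\psi_rv^\tts=0$; hence $\psi_r\tilde v=v^\tts\in N$. But $\psi_r\tilde v\in C$, so $v^\tts\in N\cap C=0$, contradicting $v^\tts\ne0$. Therefore $\spe\la_\kappa$ is not semisimple, and neither is $\scrr^\La_n$.

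\emph{Main difficulty.} The combinatorial heart of this argument is the production, for every admissible $n$, of the pair $(\ttt,\tts)$ with the stated uniqueness properties, and the verification that no standard $\la$-tableau shares the residue sequence of $\ttt$, of $\tts$, or of any $s_p\tts$ — issues which simply do not arise in the semisimple case, where \cref{residue,leaveresidue} make residue sequences rigid. A secondary subtlety, specific to type $C^{(1)}_\ell$, is that \cref{spechtbasis} gives only a spanning set for $\spe\la_\kappa$, so one must separately ensure $v^\tts\ne0$ and the linear independence used above; this can be extracted from the rank formula \cite[Theorem~2.5]{aps} for $\dim e(\bfi)\scrr^\La_ne(\bfi)$, or handled by treating $\ell=\infty$ via \cite[Theorem~3.19]{aps} and $\ell<\infty$ with an explicit small module built as in \cref{Lambda0notss}.
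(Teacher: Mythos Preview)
Your overall strategy --- find a Specht module with a one--dimensional submodule that is not a summand --- is exactly the paper's, and your reduction via the Dynkin automorphism is a clean way to handle the two alternatives (the paper instead treats them separately, using the multicharges $\overline{\kappa}$ and $\hat{\kappa}$). Your direct ``no complement'' argument is also fine in principle, and is a reasonable alternative to the paper's comparison with $\spe\mu_{\overline{\kappa}}$.

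However, there is a genuine gap: the assertion that $\la^{(j)}=(2,1^{n-2})$ ``works'' for all $n\ge 2a+2$ is false. Take $\ell=\infty$, $a=\overline{\kappa_j}=1$, $n=6$, so $\la^{(j)}=(2,1^4)$. The five standard $\la$-tableaux are indexed by $k\in\{2,\dots,6\}$, where $k$ is the entry in node $(1,2,j)$; their residue sequences are
\[
(1,2,0,1,2,3),\ (1,0,2,1,2,3),\ (1,0,1,2,2,3),\ (1,0,1,2,2,3),\ (1,0,1,2,3,2)
\]
for $k=2,3,4,5,6$ respectively. The tableaux $k=4$ and $k=5$ share a residue sequence, so the only adjacent pair $(\ttt,\tts)=(\ttt_k,\ttt_{k+1})$ with both residue sequences unique is $(k,k+1)=(2,3)$. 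But for that pair $r=2$ and $((\bfi^\ttt)_2,(\bfi^\ttt)_3)=(2,0)$, which are not adjacent residues; hence $\psi_2^2 e(\bfi^\ttt)=e(\bfi^\ttt)$ and $\psi_2 v^\tts=v^\ttt\neq 0$, so $\bbf v^\tts$ is \emph{not} a submodule. Thus no pair $(\ttt,\tts)$ satisfying your requirements exists for this $\la$, and your ``for larger $n$ one argues similarly'' cannot be carried out.

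The paper avoids this by taking the hook the other way round: $\la^{(j)}=(n-2a,\,1^{2a})$, fixing the leg length at $2a$ and letting the arm grow. With this choice the \emph{least} dominant tableau $\ttt$ has residue sequence $(a,a{-}1,\dots,0,1,\dots,a,a{+}1,\dots)$, is the unique standard $\la$-tableau with that sequence, and one checks (using \cref{gen action on basis}) that $x_r v^\ttt=0$ and $\psi_r v^\ttt=0$ for all $r$; so $\bbf v^\ttt$ itself is the one--dimensional submodule, without needing an auxiliary $\tts$. Your argument for $n=2a+2$ is in fact exactly this (since then $(2,1^{n-2})=(n-2a,1^{2a})$); the fix is simply to keep the leg length at $2a$ for all $n$.
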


\begin{proof}
We fix $1\leq j \leq l$ such that either $\tfrac{n-1}{2} > \overline{\kappa_j}$ or $\ell - \tfrac{n-1}{2} < \overline{\kappa_j}$.
Set $\mu\in \mptn l n$ to be the multipartition such that every component is empty except component $j$, with $\mu^{(j)} = (1^n)$.

If $\tfrac{n-1}{2} > \overline{\kappa_j}$,
we set $\la\in \mptn l n$ to be the multipartition such that every component is empty except component $j$, with $\la^{(j)} = (n - 2\overline{\kappa_j}, 1^{2\overline{\kappa_j}})$.
We will show that for $\ttt$ the least dominant standard $\la$-tableau, the homogeneous basis element $v^\ttt = \psi_{w^\ttt} z^\la$ generates a one-dimensional submodule of $\spe\la_{\overline{\kappa}}$ isomorphic to $\spe\mu_{\overline{\kappa}}$.

If $\ell - \tfrac{n-1}{2} < \overline{\kappa_j}$,
we may instead set $\la\in \mptn l n$ to be the multipartition such that every component is empty except $\la^{(j)} = (2(\ell - \overline{\kappa_j}),1^{n - 2(\ell - \overline{\kappa_j}}))$.
A similar argument shows that for $\ttt$ the least dominant standard $\la$-tableau, $v^\ttt$ generates a one-dimensional submodule of $\spe\la_{\hat{\kappa}}$ isomorphic to $\spe\mu_{\hat{\kappa}}$, so we will focus on the former case, leaving the latter as an exercise.

We have that $e(\bfi^\ttt) v^\ttt = v^\ttt$, where
$\bfi^\ttt = (\overline{\kappa_j}, \overline{\kappa_j}-1,\dots, 1, 0, 1, \dots, \overline{\kappa_j}, \overline{\kappa_j}+1, \dots)$.
We will show that all $x$ and $\psi$ generators of $\scrr^\La_n$ annihilate $v^\ttt$.
First, let $1\leq r\leq n$.
Then by \cref{gen action on basis}(i),
\[
x_r v^\ttt = \; \sum_{\mathclap{\substack{\tts\in \std\la\\ \bfi^\tts = \bfi^\ttt\\ \tts \doms \ttt}}} \; a_\tts v^\tts.
\]
However, it is clear that $\ttt$ is the only standard $\la$-tableau with residue sequence $\bfi$, so that $x_r v^\ttt = 0$.
Now suppose that $1\leq r < n$.
Then since $\ttt$ is the least dominant standard $\la$-tableau, we see by \cref{gen action on basis}(ii) that
\[
\psi_r v^\ttt = \; \sum_{\mathclap{\substack{\tts\in \std\la\\ \bfi^\tts = s_r \bfi^\ttt\\ \tts \doms \ttt}}} \; a_\tts v^\tts.
\]
However, there is no standard $\la$-tableau with residue sequence $s_r \bfi^\ttt$, so that $\psi_r v^\ttt = 0$.

To see that $\spe\mu_{\overline{\kappa}}$ has no complement in $\spe\la_{\overline{\kappa}}$ (i.e.~is not a direct summand), it suffices to note that the residue sequence of the unique standard $\mu$-tableau is different to the residue sequence $\bfi^\la$ of the initial $\la$-tableau $\ttt^\la$, so that there is no non-zero homomorphism $\spe\la_{\overline{\kappa}} \rightarrow \spe\mu_{\overline{\kappa}}$.
\end{proof}

\begin{rem}
Our choice of multicharge defining the Specht modules in \cref{ss2fail} ensures (since $\overline{\kappa_j}\neq \ell$) that $\res(1,2,j) = \overline{\kappa_j} + 1$.
Similarly, in the case left as an exercise, $\res(2,1,j) = \overline{\kappa_j} + 1$.
Thanks to the symmetry in the type $C$ residue pattern, this suffices to prove that $\scrr^\La_n$ is not semisimple.
A different choice of multicharge $\kappa'$ satisfying $\Lambda = \Lambda_{\overline{\kappa'}}$ would also do the trick, but would need a slightly different choice of multipartition $\la$.
\end{rem}

We now turn our attention to the case where condition \ref{ss1} fails.

\begin{lem}\label{kapparepeat}
Suppose that condition \ref{ss2} holds, but $\overline{\kappa_j} = \overline{\kappa_{j'}}$ for some $1\leq j\neq j' \leq l$.
Then $\scrr^\La_n$ is not semisimple.
\end{lem}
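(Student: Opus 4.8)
The plan is to exhibit an explicit $2$-dimensional $\scrr^\La_n$-module that is indecomposable but not simple, adapting the construction in the proof of \cref{Lambda0notss}. Write $a:=\overline{\kappa_j}=\overline{\kappa_{j'}}$; since $\La=\La_\kappa$, the integer $m:=\langle\La,\alpha^\vee_a\rangle=\#\{1\le t\le l:\overline{\kappa_t}=a\}$ is at least $2$, and by \ref{ss2} we have $\tfrac{n-1}2\le a\le\ell-\tfrac{n-1}2$, so $1\le a\le\ell-1$. Let $\la\in\mptn l n$ have $\la^{(j)}=(n)$ and all other components empty; after replacing $\kappa$ by a multicharge with the same associated weight $\La$ if necessary, we may assume the residue sequence $\bfi:=\bfi^\la=(\overline a,\overline{a+1},\dots,\overline{a+n-1})=:(i_1,\dots,i_n)$ ascends in unit steps from $a$, turns around at most once — and then only at the value $\ell$ — and never reaches $0$; this last assertion uses \ref{ss2}. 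Note that $i_1=a$, that $i_r\ne i_{r+1}$ for all $r$, that $e(\bfi)\ne0$ in $\scrr^\La_n$ by the rank formula \cite[Theorem~2.5]{aps}, and that the only triple $(i_r,i_{r+1},i_{r+2})$ of consecutive residues producing a braid-relation error term is $(\ell-1,\ell,\ell-1)$, occurring for at most one index $r$.

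I would then let $M$ be the $\scrr^\La_n$-module generated by $u,v$ subject to
\[
e(\bfi)u=u,\qquad e(\bfi)v=v,\qquad \psi_ru=\psi_rv=0\ \ (1\le r<n),\qquad x_ru=0,\quad x_rv=\epsilon_r u\ \ (1\le r\le n),
\]
where $(\epsilon_1,\dots,\epsilon_n)\in\{0,\pm1\}^n$ is the unique sequence with $\epsilon_1=1$, with $\epsilon_r+\epsilon_{r+1}=0$ whenever $i_r,i_{r+1}\in\{1,\dots,\ell-1\}$, and with $\epsilon_{r_0}=0$, $\epsilon_{r_0+1}=-\epsilon_{r_0-1}$ at the (at most one) index $r_0$ with $i_{r_0}=\ell$; this is the "alternate, then reflect at the turning point" sign pattern used in \cref{Lambda0notss}.

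To check $M$ is a module I would verify the defining relations on $u$ and $v$: all relations with $\psi$ in every term, the idempotent relations, and $x_rx_s=x_sx_r$ are immediate (both sides kill $v$, and $e(\bfi')$ kills $M$ for $\bfi'\ne\bfi$); since $i_r\ne i_{r+1}$, the relations moving $x$ past $\psi$ reduce to $0=0$; the quadratic relations $\psi_r^2e(\bfi)=(\cdots)e(\bfi)$, applied to $v$ and using $\psi_rv=0$ and $x_su=0$, become exactly the defining equations for the $\epsilon_r$ (for instance $(x_r+x_{r+1})v=(\epsilon_r+\epsilon_{r+1})u$ in the interior case, and $(x_{r_0-1}^2+x_{r_0})v=\epsilon_{r_0}u=(x_{r_0}+x_{r_0+1}^2)v$ at the turning point); and the one braid relation with an error term becomes $0=(\epsilon_{r_0-1}+\epsilon_{r_0+1})u$. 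Finally $x_1^{\,m}e(\bfi)=0$ holds on $M$ because $x_1v=\epsilon_1u=u$ while $x_1u=0$, so $x_1^2v=0$ and hence $x_1^mv=0$ as $m\ge2$ — the unique place where $m\ge2$, i.e.\ the hypothesis $\overline{\kappa_j}=\overline{\kappa_{j'}}$, enters. Then $M=\bbf u\oplus\bbf v$ has $\bbf u$ as a submodule, isomorphic to the $1$-dimensional Specht module $\spe\la_\kappa$, and this submodule has no complement: any submodule containing an element $\alpha u+\beta v$ with $\beta\ne0$ also contains $x_1(\alpha u+\beta v)=\beta u$, hence $u$, hence all of $M$, so $\bbf u$ is the unique $1$-dimensional submodule. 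Thus $M$ is indecomposable of length $2$, and $\scrr^\La_n$ is not semisimple.

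The main obstacle is the combinatorial bookkeeping behind "the residue sequence $\bfi$": one must locate precisely where $\bfi=(\overline a,\dots,\overline{a+n-1})$ meets $\ell$, hence which of the five forms each quadratic relation $\psi_r^2e(\bfi)$ takes and which braid relations carry error terms, and then confirm that the resulting sign conditions on $(\epsilon_r)$ admit a (necessarily unique) solution with $\epsilon_1=1$. This is the same type of verification as in \cref{Lambda0notss}; the one genuinely new feature is that the sign pattern starts at the non-zero residue $a$, forcing $\epsilon_1\ne0$, which is exactly what makes the cyclotomic relation require $\langle\La,\alpha^\vee_a\rangle\ge2$.
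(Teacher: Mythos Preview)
Your proposal is correct and is essentially the same argument as the paper's: both construct the two-dimensional uniserial module on the residue sequence $\bfi=\bfi^\la$ for $\la^{(j)}=(n)$, with $\psi_r$ acting by zero and $x_r$ acting via the same ``alternate, vanish at $\ell$, then reflect'' sign pattern. Your write-up is in fact slightly more explicit than the paper's, since you spell out the cyclotomic check $x_1^2v=0$ (the unique place the hypothesis $\langle\La,\alpha^\vee_a\rangle\ge2$ enters) and give the short argument that $\bbf u$ has no complement.
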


\begin{proof}
The proof is similar to the proof of \cref{Lambda0notss}.
We let $\la\in \mptn l n$ be the multipartition such that every component is empty except for component $j$, with $\la^{(j)} = (n)$, and let $\bfi = \bfi^\la$.

Define $M$ to be the $\scrr^\La_n$-module with generators $u,v$ subject to the following relations.

\begin{align*}
e(\bfi) u &= u,\\
e(\bfi) v &= v,\\
\psi_r u &= \psi_r v = 0 \text{ for all } r,\\
x_r u &= 0 \text{ for all } r,\\
x_{\ell - \overline{\kappa_j} + 1} v &= 0,\\
(-1)^{r+1} x_r v &= u \text{ for all } 1\leq r< \ell - \overline{\kappa_j} + 1,\\
(-1)^{r} x_r v &= u \text{ for all } \ell - \overline{\kappa_j} + 1 < r\leq n.
\end{align*}

Then $M$ is a two-dimensional vector space over $\bbf$, and we proceed to show that it is an $\scrr^\La_n$-module.
For most of the relations, the result is trivial, so we check the quadratic and braid relations.
We note that since \ref{ss2} holds, $\bfi$ is a prefix of $(\overline{\kappa_j}, \overline{\kappa_j}+1, \dots, \ell-1, \ell, \ell-1, \dots, \overline{\kappa_j})$, so that there is only a single non-trivial braid relation to check, corresponding to $(i_{\ell - \overline{\kappa_j}}, i_{\ell - \overline{\kappa_j} + 1}, i_{\ell - \overline{\kappa_j} + 2}) = (\ell-1, \ell, \ell-1)$.

First, we deal with the quadratic relations.
For $1\leq r< \ell - \overline{\kappa_j}$, we have $\psi_r^2 e(\bfi) = (x_r + x_{r+1}) e(\bfi)$, and both sides kill $u$ and $v$.
Next, $\psi_{\ell - \overline{\kappa_j}}^2 e(\bfi) = (x_{\ell - \overline{\kappa_j}}^2 + x_{\ell - \overline{\kappa_j} + 1}) e(\bfi)$, and both sides again kill $u$ and $v$.
Similarly, both sides of $\psi_{\ell - \overline{\kappa_j} + 1}^2 e(\bfi) = (x_{\ell - \overline{\kappa_j} + 1} + x_{\ell - \overline{\kappa_j} + 2}^2) e(\bfi)$ kill $u$ and $v$.
Finally, for $\ell - \overline{\kappa_j} + 1 < r < n$, we have $\psi_r^2 e(\bfi) = (x_r + x_{r+1}) e(\bfi)$ and both sides kill $u$ and $v$.

Finally, we check the non-trivial braid relation, which is only present if $n > \ell - \overline{\kappa_j} + 2$.
We have
\[
(\psi_{\ell - \overline{\kappa_j} + 1} \psi_{\ell - \overline{\kappa_j}} \psi_{\ell - \overline{\kappa_j} + 1} - \psi_{\ell - \overline{\kappa_j}} \psi_{\ell - \overline{\kappa_j} + 1} \psi_{\ell - \overline{\kappa_j}}) e(\bfi) =  (x_{\ell - \overline{\kappa_j}} + x_{\ell - \overline{\kappa_j} + 2}) e(\bfi).
\]
Both sides of the above equation kill $u$ and $v$, which completes our proof, as $M$ is uniserial.
\end{proof}

\begin{lem}\label{ss1fail}
Suppose that condition \ref{ss2} holds, $\overline{\kappa_j}$ are distinct, but for some $i\in I$, $\langle \La, \alpha^\vee_{i,n} \rangle >1$.
Then $\scrr^\La_n$ is not semisimple.
\end{lem}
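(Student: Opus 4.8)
The plan is to argue as in the proof of \cref{ss2fail}: I will produce a multipartition $\la\in\mptn l n$, supported on just two components, whose Specht module $\spe\la_\kappa$ has a one-dimensional submodule generated by $v^\ttt$ for $\ttt$ the least dominant standard $\la$-tableau, with $\bfi^\ttt\neq\bfi^\la$. Then, exactly as in \cref{ss2fail}, a complement to $\bbf v^\ttt$ would give a surjection $\spe\la_\kappa\to\bbf v^\ttt$ carrying the cyclic generator $z^\la$ --- which is fixed by $e(\bfi^\la)$ --- into a module on which $e(\bfi^\la)$ acts as zero, which is absurd; hence $\spe\la_\kappa$, and so $\scrr^\La_n$, is not semisimple.

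For the set-up: since $\langle\La,\alpha^\vee_{i,n}\rangle>1$ and the $\overline{\kappa_j}$ are distinct, the cyclic interval $\{i,i+1,\dots,i+n-1\}\subseteq I$ contains two of the residues $\overline{\kappa_1},\dots,\overline{\kappa_l}$. I pick $j\neq j'$ and write $a=\overline{\kappa_j}$, $b=\overline{\kappa_{j'}}$ for two such residues, chosen as close together as possible. Using \ref{ss2} (which forces $\tfrac{n-1}{2}\le\overline{\kappa_t}\le\ell-\tfrac{n-1}{2}$ for all $t$), I would first check that this interval can be taken to contain neither $0$ nor $\ell$, so that $a$, $b$, and all residues lying between them belong to $\{1,\dots,\ell-1\}$; relabelling, assume $a<b$, so $1\le b-a\le n-1$.

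Next I build $\la$, supported on components $j$ and $j'$, of total size $n$, chosen so that the diagonal of residue $b$ may be filled either by a node in component $j$ or by a node in component $j'$ --- this dichotomy is precisely the failure of \ref{ss1}, and it is what causes $\ttt^\la$ not to be the unique standard $\la$-tableau of residue sequence $\bfi^\la$. A convenient choice places a ``column of residues $b,b-1,\dots,a+1$'' in one component next to a row of residues $a,a+1,\dots$ in the other, so that the residue sequence $\bfi^\ttt$ of the least dominant standard $\la$-tableau $\ttt$ reads $(b,b-1,\dots,a+1,a,a+1,a+2,\dots)$ --- crucially, with consecutive entries always differing by $0$ or $\pm1$. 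When $b-a=n-1$ one component degenerates to a single box, and it is cleanest to take $\la$ with $\la^{(j)}=(n-1)$, $\la^{(j')}=(1)$ (or $\la^{(j)}=(1)$, $\la^{(j')}=(1^{n-1})$) according to which of $j,j'$ is smaller; this extremal case is treated directly. With $\ttt$ the least dominant standard $\la$-tableau, one then shows $x_rv^\ttt=0$ for all $r$ by \cref{gen action on basis}(i), since $\ttt$ is the unique standard $\la$-tableau with residue sequence $\bfi^\ttt$; and $\psi_rv^\ttt=0$ for all $r$. For the latter: as $\ttt$ is least dominant, $s_rw^\ttt$ is never longer than $w^\ttt$, so the exceptional clause of \cref{gen action on basis}(ii) does not apply and $\psi_rv^\ttt$ lies in the span of the $v^\tts$ with $\tts\doms\ttt$ and $\bfi^\tts=s_r\bfi^\ttt$; if $s_r\bfi^\ttt$ is not the residue sequence of any standard $\la$-tableau this span is zero, and otherwise one uses the identity $v^\ttt=\psi_rv^{\tts}$ (the exceptional clause of \cref{gen action on basis}(ii) applied to the appropriate more dominant $\tts$, which exists because $s_rw^\ttt$ is shorter than $w^\ttt$) together with the quadratic relation for $\psi_r^2$ --- which, because consecutive residues of $\bfi^\ttt$ differ by $0$ or $\pm1$, is either $0$ or lies in $\bbf[x_r,x_{r+1}]$ --- to reduce again to \cref{gen action on basis}(i) and conclude $\psi_rv^\ttt=0$, just as in the computations underlying \cref{ss2fail}.

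The main obstacle is the combinatorial bookkeeping in this last step: $\la$ must be chosen so that $\ttt$, and also each more dominant tableau $\tts$ that intervenes when some $\psi_rv^\ttt$ is straightened, is the unique standard tableau of its residue sequence --- and this uniqueness is exactly what the failure of \ref{ss1} works against, so the shape of $\la$ has to be picked with care; one must also check that no row or column of $\la$ reflects off the residues $0$ or $\ell$ (which is where \ref{ss2} is used), and the extremal case $b-a=n-1$, where $\la$ degenerates, needs its own short verification. Once $v^\ttt$ is shown to span a one-dimensional submodule with $\bfi^\ttt\neq\bfi^\la$ --- which is clear from the shape, since $\bfi^\ttt$ begins with $b$ and $\bfi^\la$ begins with $a$ --- the failure of semisimplicity follows as in \cref{ss2fail}.
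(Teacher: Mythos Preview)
Your strategy is exactly the paper's: exhibit a Specht module $\spe\la_{\overline\kappa}$ supported on two components, show that $v^\ttt$ (for $\ttt$ the least dominant standard $\la$-tableau) spans a one-dimensional submodule with $\bfi^\ttt\neq\bfi^\la$, and conclude as in \cref{ss2fail}. The difference is in the choice of $\la$, and yours creates a genuine gap.

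You put a \emph{column} of residues $b,b-1,\dots,a+1$ in one component and a \emph{row} of residues $a,a+1,a+2,\dots$ in the other. These residue sets \emph{overlap}: both contain $a+1,\dots,b$. That breaks the step where you reduce $\psi_r v^\ttt$ to \cref{gen action on basis}(i). At the critical index $r=b-a$ you write $v^\ttt=\psi_r v^\tts$ with $\tts=s_r\ttt$, and then $\psi_r v^\ttt=\psi_r^2 v^\tts=(x_r+x_{r+1})v^\tts$. For \cref{gen action on basis}(i) to kill this you need $\tts$ to be the unique standard $\la$-tableau with residue sequence $\bfi^\tts$. It is not: the residue sequence $\bfi^\tts=(b,\dots,a+2,a,a+1,a+1,a+2,\dots)$ is also realised by the tableau $\ttu$ that places entry $b-a+1$ in the row's second node (residue $a+1$) rather than the column's last node (also residue $a+1$), and $\ttu\doms\tts$. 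So $(x_r+x_{r+1})v^\tts$ may be a nonzero multiple of $v^\ttu$, and your sketch gives no reason it should vanish. Your caveat that ``the shape of $\la$ has to be picked with care'' is exactly right, but the shape you then pick does not meet it.

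The paper sidesteps this by taking \emph{both} components to be columns when $j<j'$ (respectively both rows when $j>j'$): $\la^{(j)}=(1^{n-k})$, $\la^{(j')}=(1^k)$ with $\overline{\kappa_j}=i$ and $\overline{\kappa_{j'}}=i+k$. Then component $j'$ carries residues $\{i+1,\dots,i+k\}$ and component $j$ carries residues in $\{0,1,\dots,i\}$ (even allowing reflection at $0$, condition \ref{ss2} prevents the column from climbing back to $i+1$). The two residue sets are disjoint, so every standard $\la$-tableau is determined by its residue sequence; now \cref{gen action on basis}(i) immediately gives $(x_k+x_{k+1})v^\tts=0$, and the rest of your outline goes through verbatim.
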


\begin{proof}
In spirit, the proof is the same as that of \cref{ss2fail}.
Since we have assumed that condition \ref{ss2} holds, $\ell\geq n-1$ and we may assume that $i = \overline{\kappa_j}$ for some $1\leq j \leq l$, and for some $1\leq j' \leq l$ and $1\leq k \leq \ell - i - \tfrac{n-1}{2}$, $\overline{\kappa_{j'}} = i + k$.

We consider two cases -- either $j < j'$ or $j > j'$.
As in the proof of \cref{ss2fail}, we will in each case define a multipartition $\la\in \mptn l n$ and let $\ttt$ denote the least dominant standard $\la$-tableau, and will show that $v^\ttt = \psi_{w^\ttt} z^\la = \psi_1 \psi_2 \dots \psi_{n-1} z^\la$ generates a one-dimensional submodule of $\spe\la_{\overline{\kappa}}$.

First suppose that $j < j'$.
Then we define $\la \in \mptn l n$ to be the multipartition with all components empty except components $j$ and $j'$, with $\la^{(j)} = (1^{n - k})$ and $\la^{(j')} = (1^{k})$.
Note that the standard $\la$-tableaux are uniquely determined by their residue sequences, by \cref{residue}.
Now it follows from \cref{gen action on basis} that all $x$ and $\psi$ generators except possibly $\psi_k$ annihilate $v^\ttt$.
We note that $\psi_{w^\ttt}$ is fully commutative, and has an expression starting with $\psi_k$.
Let $\tts$ denote the tableau $s_k \ttt$, so that $v^\ttt = \psi_k \psi_{w^\tts} z^\la$.
Then
\[
\psi_k v^\ttt = \psi_k^2 \psi_{w^\tts} z^\la = (x_k + x_{k + 1}) \psi_{w^\tts} z^\la = 0,
\]
where the last equality follows from \cref{gen action on basis}(i).
We have proved that $v^\ttt$ generates a one-dimensional submodule of $\spe\la_{\overline{\kappa}}$.
As in the proof of \cref{ss2fail}, examining residues yields that this module is not a direct summand of $\spe\la_{\overline{\kappa}}$.

Finally, if $k>0$ and $j > j'$, we define $\la \in \mptn l n$ to be the multipartition with all components empty except components $j$ and $j'$, with $\la^{(j)} = (k)$ and $\la^{(j')} = (n-k)$.
This case is almost identical to the other, and we leave the details to the reader.
\end{proof}

Combining \cref{ss,Lambda0notss,ss2fail,kapparepeat,ss1fail}, we have proved our main theorem, \cref{main}.

\bibliographystyle{amsalpha}  %(other styles: ieeetr, plain, amsplain, siam, unsrt, abbrv, apalike, alpha, amsalpha)
\addcontentsline{toc}{section}{\refname}
\bibliography{master}

\end{document}